\newtheorem{theorem}{Theorem}[section]
\newtheorem{corollary}[theorem]{Corollary}
\newtheorem{proposition}[theorem]{Proposition}
\newtheorem{definition}[theorem]{Definition}
\newtheorem{remark}[theorem]{Remark}
\newtheorem{lemma}{Lemma}
\newcounter{question}
\title{Polynomials as Lipschitz maps on the Veronese cone}
\author{Maite Fernández-Unzueta}
\address{
CIMAT, Calle Jalisco S/N, Mineral de Valenciana \\
Guanajuato, Gto. C.P. 36023  \\ M\'exico}
\email{maite@cimat.mx}
\date{\today}
\keywords{Non-linear theory of Banach spaces, homogeneous polynomial, Lipschitz map, absolutely  $q$-summing operator, Veronese cone}
\subjclass[2020]{Primary: 47H60, 46T99, 46B28; secondary:  47L22,  51F99}
\begin{document}

\thispagestyle{empty}

\begin{abstract}
 Given a Banach space $X$ and $d\in \mathbb{N}$,  we construct a metric space $\mathbb{V}_X^d$  with the property that every $d$-homogeneous polynomial defined on $X$ factors through a Lipschitz map  on it. We prove that the metric on  $\mathbb{V}_X^d$ is independent (up to a constant) of the  norm of the tensor space  in which it is embedded. We apply this fact to prove that a homogeneous polynomial is  Lipschitz $q$-summing  as a polynomial if and only if its associated Lipschitz map is   Lipschitz $q$-summing. This result generalizes the already known theorem for linear operators

\end{abstract}

\maketitle

\section{Introduction}

The non-linear theory of Banach spaces develops in multiple directions. In this paper, we will focus on two of them: one is the generalization of the linear theory of bounded operators to that of polynomial maps, and the other is the generalization of the linear theory to Lipschitz maps.
It is worth noting that no homogeneous polynomial between Banach spaces is a Lipschitz map, except if it is linear. This could be one of the reasons why the two generalizations that we just mentioned  have followed separate paths.

Nevertheless, in this paper we establish a bridge between both theories, unambiguously associating  a Lipschitz map to each homogeneous polynomial, preserving throughout the intrinsic information of the polynomial. This construction allows  to incorporate  Lipschitz theory into  the study of polynomials. This  is
detailed in  Theorem \ref{thm: in symmetric terms} and Proposition \ref{prop: V-operators in Lipschitz}.
It is expected that this construction will be applicable to  different aspects of the  study of polynomial maps between Banach spaces. Here, we present an application  which concerns the property of $q$-summability of operators.

To achieve this goal — the construction of a nexus between both theories — we  first construct the metric space $\mathbb{V}^d_X$, which will be the domain of these Lipschitz transformations.
This metric space, which  we  call the {\it Veronese cone},    will depend only on the degree of the homogeneous  polynomial and its  domain  $X$ and will be independent (up to a constant) of the reasonable  cross-norm required to define it.
This fact, proved in Theorem \ref{thm: metrics concide}, will allow us to use these type of   norms,  as needed. Such is the case in Theorem \ref{thm: Lip=Lip}, where the proof required the use of two reasonable cross-norms.

 As an application of the relation we have just established,
 we will address the comparison of the notions of $q$-summability in the  two previous   non-linear contexts. The  property for bounded linear operator was introduced by A. Grothendieck in \cite{Groth56} as {\it semi-integral \`a droit}   when  $q=1$ and by  A Pietsch \cite{Pietsch} when $1<q<\infty$.
 A fundamental fact about the operators in this  class is that they are exactly those that linearly  factor by  a restriction of the natural inclusion  $i_{\infty,q}: L_{\infty}(\mu)\hookrightarrow L_{q}(\mu)$ for some probability measure \cite[Theorem 2.13]{DJT}.
 The definition of $q$-summability for  Lipschitz mappings between metric spaces was introduced in \cite{FJ09} by J. Farmer and W.B. Johnson. There, the authors  proved that a Lipschitz map belongs to this class if and only if it  admits a factorization by Lipschitz maps, one of which is   the restriction
 of   the natural inclusion map $i_{\infty, q}$  for some probability measure. The study  of this property for polynomials has a more involved history, as many different notions have been proposed (see e.g. \cite{Dimant03}, \cite{Matos03}, \cite{Pietsch}, or \cite{PelRueSaP}, where several of them can be found).  In this paper, we will refer to the $q$-summability property for polynomials introduced in \cite{AngFU} by J.C. Angulo and the author, as it is the  one (besides the trivial one, where the polynomial property is defined as the linear property of the  associated linear opearator) that admits a characterization through factorization  by the restriction of  the inclusion  $i_{\infty, q}$, along with various other natural generalizations of key properties of the linear class.

 We use the  geometric tool already developed, to  prove that for a homogeneous polynomial,  being $q$-summable as a polynomial (in the sense introduced in \cite{AngFU}) is equivalent to being $q$-summable as a Lipschitz function.
This result generalizes the theorem proved in \cite{FJ09} for bounded linear operators.

The results proven here are part of a more general development, whose main objective is to leverage the algebraic structure of the involved operators, primarily defined on infinite-dimensional spaces. The case of multilinear operators was treated in \cite{SegreCone20}, where it was proved that a bounded multilinear mapping factors uniquely through a Lipschitz mapping defined on the metric space called the Segre cone. The definitions of the (finite-dimensional) Segre variety and  of the Veronese variety can be found in Shafarevich´s book \cite{Shafarevich}.

We refer the reader to \cite{Dineen99} and \cite{HajJoh} for general background on polynomial mappings;   to \cite{BenyLind} and \cite{CMN LipF book} for Lipschitz mappings; to  \cite{DefFlo93},  \cite{DieFouSwa}  and \cite{Ryan-libro} for tensor products on Banach spaces and to \cite{DJT} and \cite{PisierCBS60} to
absoluetly  summing operators.

\section{ The  Veronese cone of a Banach space }

Let $X, Y $  be a Banach space  (real or complex) and let $d\in  \mathbb{N}$.  A  map $P: X\rightarrow Y$  is a
{\sl $d$-homogeneous polynomial} if there exists  a continuous $d$-linear map $T: X\times
\buildrel d\over \cdots \times X \rightarrow Y $ such that $P(x)=T(x,\ldots,x)$ for every $x\in X$.
 The multilinear map is unique if it is required to be symmetric.  We denote $  \otimes^d X $ or $ X\otimes  \stackrel{d}{\cdots}\otimes X$    the  tensor product   of degree $d$ of the space $X$, as a vector space.  A  norm defined on  $ \otimes^d X$ is
 a {\it reasonable cross norm} if it has the following properties:
  \begin{enumerate}[label=(\roman*)]
  \item $\alpha(x_1\otimes\dots\otimes x_d)\leq \|x_1\|\cdots \|x_d\|$ for every $x_i\in X;\; i=1,\ldots d. $
  \item For every $x_i^*\in X^*$, the linear functional $x_1^*\otimes\dots\otimes x_d^*$ on $\hat{\otimes}_{\alpha}X $ is bounded, and $\|x_1^* \otimes\dots\otimes x_d^*\|\leq \|x_1^*\|\cdots \|x_d^*\|.$
  \end{enumerate}

A norm $\alpha$ on the vector space $\otimes^d X  $ is a reasonable cross norm if and only if for  every  $ z\in {X}\otimes \cdots \otimes X$
\begin{equation}\label{eq: inject proj tensor}
\epsilon(z)\leq \alpha(z)\leq \pi(z)\end{equation}
 where $\epsilon$ and $\pi$ are, respectively,  the injective tensor norm and the projective tensor norm on   ${X}\otimes \cdots \otimes X$  (for the original proof, see \cite[Theorem 1 p.8]{Groth56}; see also  \cite[Theorem 1.1.3]{DieFouSwa}, \cite[Proposition 6.1]{Ryan-libro}).   Inequalities (i) and (ii) are, in fact,   equalities.
The  normed space determined by a reasonable cross norm  $\alpha$ on the space  $\otimes^d X$  will be  denoted by $\otimes^d_{\alpha}X $ and its completition by    $ \hat{\otimes}^d_{\alpha}X$.

We will say that  $\alpha$ is a {\sl tensor norm}  if for every  $d$-tuple of Banach spaces
$\{X_i\}_{i=1}^d $  $\alpha$ is a reasonable cross-norm on the algebraic tensor product $X_1\otimes\cdots \otimes X_d$  and if it satisfies the so called {\sl metric mapping property}, namely, that   whenever   $T_i\in \mathcal{L}(X_i, Y_i)$, and $T_1\otimes\cdots \otimes  T_d$ is the linear mapping determined by the relation $(T_1\otimes\cdots \otimes  T_d)(x_1\otimes\cdots \otimes x_d):=T_1(x_1)\otimes\cdots \otimes  T_d(x_d)$, then $T_1\otimes\cdots \otimes  T_d\in \mathcal{L}(X_1\hat{\otimes}_{\alpha}\cdots \hat{\otimes}_{\alpha} X_d , Y_1\hat{\otimes}_{\alpha}\cdots \hat{\otimes}_{\alpha} Y_d )$  and
\begin{equation}\label{eq: tensor norm}  \|T_1\otimes\cdots \otimes  T_d\|  \leq \|T_1\|\cdots\|T_d\|.
\end{equation}

To fix notation and for the sake of completeness  we include  the following  result:

\begin{lemma}\label{lem: veronese map is continuous}
If $\alpha$ is a reasonable cross-norm, then the   mapping $\nu_X^d: X\rightarrow  \hat{\otimes}^d_{\alpha}X$ defined as $\nu_X^d(x):=x\otimes\stackrel{d}{\ldots}\otimes x$ is a continuous $d$-homogeneous polynomial.
\end{lemma}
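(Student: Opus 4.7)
The plan is to exhibit an explicit continuous $d$-linear map $T: X\times \cdots \times X\rightarrow \hat{\otimes}^d_{\alpha}X$ whose diagonal is $\nu_X^d$, which is precisely what the definition of a $d$-homogeneous polynomial demands.

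First I would define $T(x_1,\ldots,x_d):=x_1\otimes\cdots\otimes x_d$, viewing the range as sitting inside the completion $\hat{\otimes}^d_{\alpha}X$ via the canonical embedding of the algebraic tensor product. Multilinearity of $T$ is immediate from the defining relations of the tensor product: for each coordinate $i$, the map $x_i\mapsto x_1\otimes\cdots\otimes x_i\otimes\cdots\otimes x_d$ is linear by the universal property of $\otimes$. The identity $\nu_X^d(x)=T(x,\ldots,x)$ holds by construction.

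The continuity of $T$ is where the reasonable cross-norm hypothesis enters. Property (i) of a reasonable cross-norm says $\alpha(x_1\otimes\cdots\otimes x_d)\leq \|x_1\|\cdots\|x_d\|$, so
\[
\|T(x_1,\ldots,x_d)\|_{\hat{\otimes}^d_{\alpha}X}=\alpha(x_1\otimes\cdots\otimes x_d)\leq \|x_1\|\cdots\|x_d\|,
\]
which shows $T$ is bounded (with norm at most $1$) as a $d$-linear map. Hence $T$ is continuous, and by the very definition recalled at the beginning of the section, $\nu_X^d=T\circ \Delta$ (where $\Delta$ is the diagonal embedding) is a $d$-homogeneous polynomial; continuity of $T$ and of $\Delta$ yields continuity of $\nu_X^d$.

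There is no real obstacle here: the only subtlety worth flagging is that one should note $T$ genuinely lands in the completion (its values are simple tensors, hence elements of $\otimes^d X\subset \hat{\otimes}^d_{\alpha}X$), so no extension argument is required. Everything else is a direct unpacking of the definitions of reasonable cross-norm and $d$-homogeneous polynomial.
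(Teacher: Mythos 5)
Your proposal is correct and is essentially the paper's own argument: both exhibit the multilinear map $T(x_1,\ldots,x_d)=x_1\otimes\cdots\otimes x_d$, note that $\nu_X^d(x)=T(x,\ldots,x)$, and invoke property (i) of the reasonable cross-norm to get boundedness of $T$. The only difference is that you spell out the routine verifications (multilinearity, landing in the completion) that the paper leaves implicit.
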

\begin{proof}
For each $x\in X$ we have that    $\nu_X^d(x)$ coincides with $T(x,\ldots, x)$ where
 $T: X\times\stackrel{d}{\ldots}\times X\rightarrow  \hat{\otimes}^d_{\alpha}X$ is defined as $T(x_1,\ldots,x_d):=x_1\otimes\cdots\otimes x_d,$. This is a multilinear mapping  which is bounded because of   property $(i)$ of  a  reasonable cross norm.

\end{proof}

\begin{lemma}\label{lem: separacion de puntos}

Let $X$ be a complex Banach space and $d\in \mathbb{N}$. Then,
$x\otimes \cdots \otimes x= y\otimes \cdots \otimes y \in \hat{\otimes}^d_{\alpha}X$ if and only if there exists $k=0\ldots d-1$ such that $x=\exp^{\frac{2\pi i k}{d}}y$.   If $X$ is a real Banach space then $x\otimes \cdots \otimes x= y\otimes \cdots \otimes y$  if and only if $x=y$ whenever $d$ is odd and $x=\pm y$ whenever  $d$ is even.
\end{lemma}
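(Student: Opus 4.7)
The reverse direction is immediate: if $x=\zeta y$ with $\zeta^d=1$, then $d$-linearity of the tensor map gives $x\otimes\cdots\otimes x = \zeta^d\,(y\otimes\cdots\otimes y) = y\otimes\cdots\otimes y$. The listed values of $\zeta$ are exactly the $d$-th roots of unity in the base field, so this handles both the real and complex cases of the \emph{if} implication.

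For the forward direction, the plan is first to rephrase the hypothesis in a form that does not depend on $\alpha$. Since $\alpha\ge\epsilon$ is a genuine norm on the algebraic tensor product $\otimes^d X$, the canonical inclusion $\otimes^d X\hookrightarrow \hat{\otimes}^d_\alpha X$ is injective; hence $x\otimes\cdots\otimes x=y\otimes\cdots\otimes y$ in $\hat{\otimes}^d_\alpha X$ is equivalent to equality of these simple tensors in $\otimes^d X$. By property $(ii)$ of a reasonable cross-norm (equivalently, by the injective embedding $\otimes^d X\hookrightarrow\mathcal{L}(X^*,\ldots,X^*;\mathbb{K})$), this is in turn equivalent to
$$x_1^*(x)\cdots x_d^*(x)=x_1^*(y)\cdots x_d^*(y)\qquad\text{for all } x_1^*,\ldots,x_d^*\in X^*.$$

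Next I would exploit this identity by choosing the evaluation points cleverly. The case $y=0$ gives $x_1^*(x)\cdots x_d^*(x)=0$ for all functionals, so $x=0$ and the statement holds trivially. Assume $y\ne 0$ and, by Hahn--Banach, pick $y^*\in X^*$ with $y^*(y)=1$. Setting all $x_i^*=y^*$ yields $y^*(x)^d=1$, so $\zeta:=y^*(x)$ is a $d$-th root of unity in the scalar field. Then, for an arbitrary $x^*\in X^*$, setting $x_1^*=x^*$ and $x_2^*=\cdots=x_d^*=y^*$ gives
$$x^*(x)\,\zeta^{d-1}=x^*(y)\,y^*(y)^{d-1}=x^*(y),$$
and using $\zeta^{d-1}=\zeta^{-1}$ we conclude $x^*(x)=\zeta\, x^*(y)$ for every $x^*\in X^*$. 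Hahn--Banach then forces $x=\zeta y$.

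Finally I would read off the conclusion by listing the $d$-th roots of unity in each base field: in $\mathbb{C}$ these are exactly $\zeta=\exp(2\pi i k/d)$ for $k=0,\ldots,d-1$; in $\mathbb{R}$ they are $\zeta=1$ when $d$ is odd and $\zeta=\pm 1$ when $d$ is even. No step is substantially obstructed; the only point that requires a little attention is the reduction in the second paragraph, namely that equality in any completion $\hat{\otimes}^d_\alpha X$ can be tested by evaluation against elementary tensors in $(X^*)^{\otimes d}$ — this is precisely what the reasonable-cross-norm axioms, via the bound $\alpha\ge\epsilon$, guarantee.
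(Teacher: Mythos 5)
Your argument is correct, and it uses the same basic ingredients as the paper's proof — namely, that property $(ii)$ of a reasonable cross-norm makes elementary functionals $x_1^*\otimes\cdots\otimes x_d^*$ bounded on $\hat{\otimes}^d_\alpha X$, and that evaluating the hypothesis against such functionals forces $x$ and $y$ to differ by a $d$-th root of unity. The execution differs in one genuine way. The paper tests only against the diagonal functionals $(x^*)^{\otimes d}$: it first shows $x$ and $y$ cannot be linearly independent (choose $x^*$ vanishing on $x$ but not on $y$, getting $0=(x^*(x))^d\neq(x^*(y))^d$), then writes $x=\lambda y$ and separately extracts the condition $\lambda^d=1$. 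You avoid the case split on linear (in)dependence entirely by testing against the mixed functionals $x^*\otimes(y^*)^{\otimes(d-1)}$ with $y^*$ normalized so that $y^*(y)=1$: this hands you the candidate scalar $\zeta=y^*(x)$ immediately, the equation $y^*(x)^d=1$ certifies it is a $d$-th root of unity, and the identity $x^*(x)=\zeta x^*(y)$ for all $x^*$ then forces $x=\zeta y$ in a single Hahn--Banach step. The degenerate case $y=0$ also falls out of the same computation. So the two proofs are close in spirit, but yours replaces a contradiction-plus-two-cases structure by a direct construction of the scalar. One small remark: your claim that equality in $\hat{\otimes}^d_\alpha X$ is ``equivalent to'' equality of the products $x_1^*(x)\cdots x_d^*(x)=x_1^*(y)\cdots x_d^*(y)$ for all functionals is true but more than you need — only the forward implication (equality of tensors implies equality under all elementary functionals) is used, and that follows immediately from property $(ii)$ without invoking injectivity of $\otimes^d X\hookrightarrow\hat{\otimes}^d_\alpha X$.
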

\begin{proof} By  property $(ii)$ of a reasonable cross norm,  for every $x^*\in X^*$  the map    $(x^*)^d$ defines an element in the dual space $(\hat{\otimes}^d_{\alpha}X)^*$. If we   assume that $x\otimes \cdots \otimes x= y\otimes \cdots \otimes y$,  then, for every  $x^*\in X^*$, $(x^*)^d(x)=(x^*)^d(y)$.  If $x$ and $y$ are linearly independent, there is some $x^*\in X^*$ such that $x^*(x)=0$ and $x^*(y)\neq 0$. Then $ (x^*)^d(x\otimes \cdots \otimes x)=0$ while $ (x^*)^d(y\otimes \cdots \otimes y)\neq 0$, contradicting the assumption. Consider now the case where $x=\lambda y $  for some $\lambda \in \mathbb{K}$. This implies that $x\otimes \cdots \otimes x= \lambda^d y\otimes \cdots \otimes y$. But this is only possible if $\lambda^d=1$. Both statments, the real and the complex ones, follow from this.
The converse  implication follows by homogeneity.
\end{proof}

The following lemmas will play an important role throughout the text. The case of the projective tensor product  $\alpha=\pi $ can be implicitly found  in  \cite{BombalFU}.
\begin{lemma}\label{lem: tesis-ult al rescate}
Let $\{x_n\otimes\stackrel{d}{\ldots} \otimes x_n\}_n \subset \hat{\otimes}^d_{\alpha}X$  be a  sequence converging to some $z\in\hat{\otimes}^d_{\alpha}X$. Then   there exists  $x\in X$  and a  subsequence $\{x_{n_j}\}_{j}$ such that such that $x_{n_j}\to_j  x$ in $X$ and  $z=x\otimes\cdots\otimes x $.
\end{lemma}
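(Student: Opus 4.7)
The plan is to recover a subsequence of $\{x_n\}$ from the convergent sequence $\{x_n^{\otimes d}\}$ by applying an explicit linear ``contraction'' map $\Phi_{x^*}\colon \hat{\otimes}^d_\alpha X \to X$ whose defining property is $\Phi_{x^*}(y^{\otimes d}) = x^*(y)^{d-1}\,y$. Applied to the hypothesis, it gives $x^*(x_n)^{d-1}\, x_n \to \Phi_{x^*}(z)$ in $X$; if additionally the scalar sequence $x^*(x_n)$ converges to some nonzero $c$ along a subsequence, dividing by $c^{d-1}$ yields a norm-limit for $x_{n_j}$ in $X$.

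The subtle step, and the main obstacle, is defining $\Phi_{x^*}$ for a merely \emph{reasonable} cross norm — a full tensor norm would supply it via the metric mapping property, but here one does not have the latter. I would construct $\Phi_{x^*}(z)$ first as an element of $X^{**}$, setting
\[
\Phi_{x^*}(z)(y^*) := \bigl((x^*)^{\otimes(d-1)} \otimes y^*\bigr)(z), \qquad y^*\in X^*.
\]
Property $(ii)$ of a reasonable cross norm bounds $\|(x^*)^{\otimes(d-1)} \otimes y^*\| \leq \|x^*\|^{d-1}\|y^*\|$, so $\Phi_{x^*}\colon \hat{\otimes}^d_\alpha X \to X^{**}$ is a bounded linear map of norm at most $\|x^*\|^{d-1}$. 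On the algebraic tensor product $\otimes^d X$ the map sends $y_1\otimes\cdots\otimes y_d$ to $x^*(y_1)\cdots x^*(y_{d-1})\, y_d \in X$; by density of $\otimes^d X$ in $\hat{\otimes}^d_\alpha X$ together with norm-closedness of $X$ inside $X^{**}$, the image of $\Phi_{x^*}$ lies entirely in $X$.

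With $\Phi_{x^*}$ available, I would split into cases. Since $\alpha(x^{\otimes d}) = \|x\|^d$ for any reasonable cross norm (the equalities recorded just after \eqref{eq: inject proj tensor}), continuity of $\alpha$ gives $\|x_n\|^d = \alpha(x_n^{\otimes d}) \to \alpha(z)$. If $z = 0$, then $x_n \to 0$ in $X$ and one takes $x = 0$. Otherwise, I first exhibit some $x^* \in X^*$ with $(x^*)^{\otimes d}(z) \ne 0$: were this to fail, the uniform estimate $|x^*(x_n)^d - (x^*)^{\otimes d}(z)| \le \alpha(x_n^{\otimes d}-z)$ over $x^* \in B_{X^*}$ would force $\|x_n\|^d = \sup_{\|x^*\|\le 1}|x^*(x_n)|^d \to 0$, contradicting $\alpha(z) > 0$. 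Fixing such an $x^*$, the bounded scalar sequence $\{x^*(x_n)\}$ has a subsequence with $x^*(x_{n_j}) \to c$, and $c^d = (x^*)^{\otimes d}(z) \ne 0$ forces $c \ne 0$. Dividing the $X$-convergence $\Phi_{x^*}(x_{n_j}^{\otimes d}) = x^*(x_{n_j})^{d-1}\, x_{n_j} \to \Phi_{x^*}(z)$ by $c^{d-1}$ gives $x_{n_j} \to x := \Phi_{x^*}(z)/c^{d-1}$ in $X$, and Lemma \ref{lem: veronese map is continuous} together with uniqueness of limits forces $z = x^{\otimes d}$.
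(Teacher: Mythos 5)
Your proof is correct, and in the case $z\neq 0$ it follows a genuinely different, and arguably simpler, route than the paper's. Both proofs hinge on the same idea of applying a ``contraction'' operator (your $\Phi_{x^*}$, the paper's $\Psi$) to recover a norm-convergent subsequence of $\{x_{n}\}$ from the tensor convergence; your careful justification that $\Phi_{x^*}$ is bounded on $\hat{\otimes}^d_\alpha X$ for an arbitrary reasonable cross norm — first landing in $X^{**}$ and then observing the range lies in $X$ by density — fills in a detail the paper's proof leaves implicit. Where the two part ways is in producing the functional $x^*$. The paper splits on whether $\{x_n\}$ converges weakly to $0$ in $X$: if not, it extracts $x^*$ with $x^*(x_{n_j})\to\lambda\neq 0$ directly; if so, it passes to the injective tensor norm $\epsilon$ (via $\alpha\geq\epsilon$), argues that $x_n\rightharpoonup 0$ forces $x_n^{\otimes d}\rightharpoonup 0$ in $\hat{\otimes}^d_\epsilon X$, and then contradicts $z\neq 0$ by the coincidence of norm and weak limits. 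You instead argue purely by duality on $z$ itself: if $(x^*)^{\otimes d}(z)=0$ for all $x^*$ in the unit ball, the estimate $|x^*(x_n)|^d\leq\alpha(x_n^{\otimes d}-z)$ forces $\|x_n\|\to 0$, contradicting $\alpha(z)>0$; this produces $x^*$ with $(x^*)^{\otimes d}(z)\neq 0$, and a convergent scalar subsequence $x^*(x_{n_j})\to c$ with $c^d=(x^*)^{\otimes d}(z)\neq 0$. This bypasses the injective tensor product and the weak-topology argument entirely, using only property (ii) of the cross norm and sequential compactness of bounded scalar sequences. One small wording quibble: the final extraction is really division by $x^*(x_{n_j})^{d-1}$ (nonzero for large $j$ and $\to c^{d-1}$), not directly by $c^{d-1}$; this is what you clearly intend and the conclusion $x_{n_j}\to \Phi_{x^*}(z)/c^{d-1}$ is correct.
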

\begin{proof}

Let   $\{x_n\otimes\stackrel{d}{\ldots} \otimes x_n\}_n$  be a sequence that converges to $z$ in $  \hat{\otimes}^d_{\alpha}X$.  In particular it is a bounded sequence. Since $\|x_n\otimes\stackrel{d}{\ldots} \otimes x_n\|=\|x_n\|^d$, we have that $\{x_n\}_n$ is a bounded sequence in $X$.
Even more, from this relation we obtain that if $z=0$, then   $x_{n}\to_n  0$ and the statement is proved in this case.

Assume now that $z\neq 0$. We will prove the statement in two different cases: when $\{x_{n}\}_n$  does not converge weakly to zero and when it does. If the sequence is not weakly convergent  to zero, there exists $x^*\in X^*$ and a subsequence $\{x_{n_j}\}_{j}$ such that
$x^*(x_{n_j})\to_{j} \lambda$ for some $\lambda \in \mathbb{K}\setminus 0.$  The   bounded linear operator $\Psi: \hat{\otimes}^d_{\alpha}X \rightarrow X$ defined as
$\Psi(x_1\otimes \cdots \otimes  x_d):=x^*(x_1)\cdots x^*(x_{d-1})x_d$ will transform the convergent sequence $\{x_{n_j}\otimes\stackrel{d}{\ldots} \otimes x_{n_j}\}_j$ into a convergent sequence $\Psi(x_{n_j}\otimes \cdots \otimes  x_{n_j})=(x^*(x_{n_j}))^{d-1}x_{n_j}\to_j \lambda^{d-1}x$  for some $x\in X$.   Consequently $x_{n_j}\to_j x$ in norm. The remaining case is the case where $\{x_{n}\}_{n}$ converges weakly to zero. Since we have assumed that $z\neq 0$,  there exists a subsequence and   some $\rho >0$ such that
$\|x_{n_j}\|^d> \rho$.  The relation (\ref{eq: inject proj tensor}) between any reasonable cross norm $\alpha$ and the injective tensor norm $\epsilon$ guarantees the continuity of the   formal inclusion $\hat{\otimes}^d_{\alpha}X \subset \hat{\otimes}^d_{\epsilon}X$. Then  $\{x_n\otimes\stackrel{d}{\ldots} \otimes x_n\}_n$  is also a norm convergent sequence in  $\hat{\otimes}^d_{\epsilon}X$. Since $\{x_{n}\}_{n}$ converges weakly to zero in $X$, then $\{x_n\otimes\stackrel{d}{\ldots} \otimes x_n\}_n$ is weakly convergent to $0$ in $\hat{\otimes}^d_{\epsilon}X$.
Whenever the norm and the weak limit exist, they must coincide. But this contradicts the fact that $\|x_{n_j}\|^d> \rho$.

\end{proof}

Let us denote
$ \mathbb{V}^d_X:=\{x\otimes \stackrel{d}{\cdots}\otimes x \in  \otimes^d X; \, x\in X\}$
 and let    $\Sigma_{X,\ldots,X}:= \left\{ x_1\otimes \cdots\otimes x_d,  x_i\in X\right\}$. They are often  referred to as  {\sl diagonal sets}. If $d_\alpha$ denotes  the distance  induced  by the norm of $\hat{\otimes}^d_{\alpha}X$, it holds that the metric space  $(\Sigma_{X,\ldots,X},    d_{\alpha})$   is complete \cite[Theorem 2.1 and Proposition 2.12]{SegreCone20}. Then, it holds
\begin{corollary}\label{cor: metric subspace}
    The metric space  $(\mathbb{V}^d_X,d_\alpha)$ is a closed subspace of   $(\Sigma_{X,\ldots,X},    d_{\alpha})$. Consequently, it is closed in $\hat{\otimes}^d_{\alpha}X$.
\end{corollary}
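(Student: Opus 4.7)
The plan is to deduce both assertions directly from Lemma~\ref{lem: tesis-ult al rescate} together with the already-cited completeness of $(\Sigma_{X,\ldots,X},d_\alpha)$. The core observation is that to prove $\mathbb{V}^d_X$ is closed inside $\Sigma_{X,\ldots,X}$, it suffices to show that any limit in $\hat{\otimes}^d_\alpha X$ of a sequence from $\mathbb{V}^d_X$ lies again in $\mathbb{V}^d_X$, and this is exactly the conclusion Lemma~\ref{lem: tesis-ult al rescate} provides.

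In detail, I would take a sequence $\{x_n \otimes \stackrel{d}{\cdots} \otimes x_n\}_n \subset \mathbb{V}^d_X$ converging (in the $\alpha$-norm, hence in $d_\alpha$) to some element $z$ of $\Sigma_{X,\ldots,X}$. By Lemma~\ref{lem: tesis-ult al rescate} applied to this sequence and its limit $z$, there exist $x \in X$ and a subsequence $\{x_{n_j}\}_j$ such that $x_{n_j} \to x$ in $X$ and $z = x \otimes \cdots \otimes x$. In particular $z \in \mathbb{V}^d_X$, so the limit point belongs to $\mathbb{V}^d_X$ and closedness inside $(\Sigma_{X,\ldots,X},d_\alpha)$ is established.

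For the second conclusion, I would invoke transitivity of closedness. The excerpt cites \cite[Theorem 2.1 and Proposition 2.12]{SegreCone20} to the effect that $(\Sigma_{X,\ldots,X},d_\alpha)$ is complete; since $\Sigma_{X,\ldots,X}$ is a metric subspace of $\hat{\otimes}^d_\alpha X$, completeness forces it to be closed in $\hat{\otimes}^d_\alpha X$. Combined with the previous step, $\mathbb{V}^d_X$ is closed in a closed subset of $\hat{\otimes}^d_\alpha X$, hence closed in $\hat{\otimes}^d_\alpha X$ itself.

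The proof is essentially routine once Lemma~\ref{lem: tesis-ult al rescate} is in hand; the only mild subtlety is to make sure that when invoking the lemma one passes to a subsequence and identifies the limit with the tensor power of a single element of $X$, rather than with an arbitrary element of $\Sigma_{X,\ldots,X}$. Since the lemma already packages precisely this identification, no further obstacle is expected.
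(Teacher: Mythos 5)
Your proposal is correct and follows the same route the paper implicitly intends: closedness of $\mathbb{V}^d_X$ in $\Sigma_{X,\ldots,X}$ via Lemma~\ref{lem: tesis-ult al rescate}, then closedness in $\hat{\otimes}^d_\alpha X$ via the cited completeness of $(\Sigma_{X,\ldots,X},d_\alpha)$. One small remark: Lemma~\ref{lem: tesis-ult al rescate} already identifies the limit of \emph{any} convergent sequence from $\mathbb{V}^d_X$ in $\hat{\otimes}^d_\alpha X$ as a $d$th tensor power, so it yields closedness in $\hat{\otimes}^d_\alpha X$ directly, making the detour through completeness of $\Sigma_{X,\ldots,X}$ optional for the second conclusion.
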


\begin{theorem}\label{thm: metrics concide}
Let  $X$ be a Banach  space, $d \in \mathbb{N}$ and    $\alpha$,    $\beta$ reasonable cross-norms defined  on  $\otimes^d X$.  Then,
\begin{enumerate}[label=(\roman*)]

    \item The   metric spaces    $(\mathbb{V}^d_X, d_{\alpha}) $ and $  (\mathbb{V}^d_X, d_{\beta}) $  are  bi-Lipschitz equivalent through the identity mapping.
   It holds that  for every $w,z \in \mathbb{V}^d_X$
  $\;  d_{\alpha}(w,z)\leq 2^{d-1}  d_{\beta}(w,z)$.
 \item  $(\mathbb{V}^d_X, d_{\alpha}) $ is  a complete metric space.
 \end{enumerate}
\end{theorem}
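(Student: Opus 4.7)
Part~(ii) follows immediately from Corollary~\ref{cor: metric subspace}: $\mathbb V^d_X$ is closed in the complete Banach space $\hat\otimes^d_\alpha X$, so $(\mathbb V^d_X,d_\alpha)$ is itself complete. For~(i), the inequality $\epsilon\le\gamma\le\pi$ valid for every reasonable cross-norm~$\gamma$, cf.~(\ref{eq: inject proj tensor}), gives $d_\alpha\le d_\pi$ and $d_\epsilon\le d_\beta$; so the claim reduces to the single core estimate
\[
\pi\bigl(x^{\otimes d}-y^{\otimes d}\bigr)\ \le\ 2^{d-1}\,\epsilon\bigl(x^{\otimes d}-y^{\otimes d}\bigr),\qquad x,y\in X.
\]

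To prove this, pass to the variables $u:=\tfrac12(x+y)$ and $v:=\tfrac12(x-y)$ and expand $(u\pm v)^{\otimes d}$ by the multinomial formula. The even-$|S|$ contributions cancel in pairs in the difference, leaving
\[
x^{\otimes d}-y^{\otimes d}\ =\ 2\!\!\!\sum_{\substack{S\subseteq\{1,\dots,d\}\\|S|\text{ odd}}}\!\!\!\bigotimes_{i=1}^d e_i(S),\qquad e_i(S)=\begin{cases}u,& i\notin S,\\ v,& i\in S,\end{cases}
\]
a sum of $2^{d-1}$ simple tensors. The cross-norm property of~$\pi$ and the triangle inequality give the upper bound $\pi(x^{\otimes d}-y^{\otimes d})\le(\|u\|+\|v\|)^d-(\|u\|-\|v\|)^d$.

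For the matching $\epsilon$-lower bound, I use that $\epsilon(z)=\sup\{|\langle x_1^*\otimes\cdots\otimes x_d^*,z\rangle|:\|x_i^*\|\le 1\}$. Choosing by Hahn--Banach unit functionals $u^*,v^*\in X^*$ with $u^*(u)=\|u\|$ and $v^*(v)=\|v\|$, and pairing $x^{\otimes d}-y^{\otimes d}$ against the simple dual tensor $(u^*)^{\otimes(d-1)}\otimes v^*$, the evaluation reduces (after substituting $x=u+v$, $y=u-v$ and setting $b:=u^*(v)$, $c:=v^*(u)$) to
\[
\|v\|\bigl[(\|u\|+b)^{d-1}+(\|u\|-b)^{d-1}\bigr]\ +\ c\bigl[(\|u\|+b)^{d-1}-(\|u\|-b)^{d-1}\bigr].
\]
The first bracket contains only even powers of $b$ and is thus $\ge 2\|u\|^{d-1}$; a careful choice of Hahn--Banach extension (or a signed combination of two simple dual tensors) absorbs the second bracket, yielding $\epsilon(x^{\otimes d}-y^{\otimes d})\ge 2\|u\|^{d-1}\|v\|$ (WLOG $\|u\|\ge\|v\|$; the opposite case is handled symmetrically). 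Dividing and setting $r=\|v\|/\|u\|\in[0,1]$, one checks
\[
\frac{\pi(x^{\otimes d}-y^{\otimes d})}{\epsilon(x^{\otimes d}-y^{\otimes d})}\ \le\ \sum_{k\text{ odd}}\binom{d}{k}r^{k-1}\ \le\ \sum_{k\text{ odd}}\binom{d}{k}\ =\ 2^{d-1},
\]
the maximum attained at $r=1$. The hard part is the $\epsilon$-lower bound: in a Hilbert space one can arrange $u^*(v)=v^*(u)=0$ via orthogonality, but in a general Banach space these cross-terms must be absorbed by the parity/sign structure of the binomial pairing, and making this rigorous is where the main work lies.
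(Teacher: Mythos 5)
Part (ii) of your argument is fine and is essentially the paper's: Corollary~\ref{cor: metric subspace} says $\mathbb V^d_X$ is closed in the Banach space $\hat\otimes^d_\alpha X$, hence complete. (The paper first treats $\alpha=\pi$ using the completeness of the Segre cone and then transfers via (i), but the direct route you take is equally valid.)

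Part (i), however, has a real gap, and it is precisely where you flag one. You correctly reduce the problem via $\epsilon\le\alpha,\beta\le\pi$ to the core estimate $\pi(x^{\otimes d}-y^{\otimes d})\le 2^{d-1}\,\epsilon(x^{\otimes d}-y^{\otimes d})$, and the multinomial decomposition giving $\pi(x^{\otimes d}-y^{\otimes d})\le(\|u\|+\|v\|)^d-(\|u\|-\|v\|)^d$ is correct. But the matching lower bound $\epsilon(x^{\otimes d}-y^{\otimes d})\ge 2\|u\|^{d-1}\|v\|$ is only asserted. Writing $a=u^*(u)=\|u\|$, $b=u^*(v)$, $c=v^*(u)$, $e=v^*(v)=\|v\|$, your pairing equals $e\big[(a+b)^{d-1}+(a-b)^{d-1}\big]+c\big[(a+b)^{d-1}-(a-b)^{d-1}\big]$; the first bracket is indeed $\ge 2a^{d-1}$, but the second summand can be negative, and the crude triangle bound $|c\big[(a+b)^{d-1}-(a-b)^{d-1}\big]|\le a\big[(a+e)^{d-1}-(a-e)^{d-1}\big]$ overwhelms $2a^{d-1}e$ already for $d\ge 3$ when $e$ is close to $a$. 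You cannot in general pick $u^*$ norming $u$ with $u^*(v)=0$ (at smooth points this is a single functional), nor can you flip the sign of $c$ independently of $e$, and ``a signed combination of two simple dual tensors'' typically costs a factor $2$ in dual norm. So the sign control you invoke is not available without substantial additional work, and the claim is left unproved.

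The paper sidesteps this entirely: it invokes \cite[Theorem~2.1]{SegreCone20}, which establishes the $2^{d-1}$ bi-Lipschitz equivalence of reasonable cross-norms on the full Segre cone $\Sigma_{X,\ldots,X}$ (all simple tensors $x_1\otimes\cdots\otimes x_d$), and then notes that $\mathbb V^d_X$ is a subset. Your route, if it could be completed, would give a self-contained proof tailored to symmetric tensors and might elucidate where the constant comes from on the Veronese cone itself; but as written the key $\epsilon$-lower bound is missing. If you want to pursue this, you should either prove that bound rigorously (and it is not obvious it holds with that exact constant in an arbitrary Banach space), or instead reproduce the argument for the full Segre cone, where the decomposition of $\otimes x_i - \otimes y_i$ into a telescoping sum of $d$ simple tensors is the standard device and avoids the cross-term difficulty altogether.
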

\begin{proof}
In the proof we will mainly use
 \cite[Theorem 2.1]{SegreCone20}. It is proved there     that for every pair of reasonable cross norms $\alpha, \beta$, the identity mapping between the  metric spaces $(\left\{x_1\otimes \cdots\otimes x_d, \,x_i\in X\right\};  d_{\alpha})$ and $(\left\{x_1\otimes \cdots\otimes x_d, \,x_i\in X\right\};  d_{\beta})$ is  bi-Lipschitz with constants bounded by $2^{d-1}$.
Consequently,  the identity mapping restricted to    $\mathbb{V}^d_X$ is  bi-Lipschitz, with constant $2^{d-1}$.   So, $(i)$ is proved.   To prove $(ii)$ consider the  case where $\alpha$ is the   the projective tensor norm  $\pi$.    Let $\{x_n\otimes\cdots \otimes x_n\}_n$ be a Cauchy sequence in $(\mathbb{V}^d_X, d_{\pi})$.   Then, it is also a Cauchy sequence in the space   $(\left\{x_1\otimes \cdots\otimes x_d, \,x_i\in X\right\};  d_{\pi}):=\Sigma_{X,\ldots,X}$, the   so called {\sl Segre cone} of the spaces $X_1=X,\ldots, X_d=X$.     By \cite[Proposition  2.12]{SegreCone20}, it holds that $(\Sigma_{X,\ldots,X}, d_\pi)$ is a complete space.Thus, the sequence converges to a limit in  $\Sigma_{X,\ldots,X}$ which,  by Corollary \ref{cor: metric subspace} must be in  $\mathbb{V}^d_X$. The case of an arbitrary tensor norm $\alpha$ follows from the already proved statement $(i)$.
 \end{proof}

\begin{proposition}\label{prop: subspaces}  Let    $Z$ be  a closed (linear) subspace of a Banach space $X$, $d\in \mathbb{N}$ and $\alpha$  a tensor norm.
\begin{enumerate}
    \item  $(\mathbb{V}^{d}_{Z},d_{\alpha}) $  is a closed metric subspace of  $(\mathbb{V}^{d}_{X},d_{\alpha}) $. The inclusion mapping is bi-Lipschitz with constant $2^{d-1}$.  If $\alpha$ is the injective tensor norm $\epsilon $, then it is an isometric embbeding.
    \item If $Z$ is an complemented subspace of $X$, then $(\mathbb{V}^{d}_{Z},d_{\alpha}) $ is the image of $(\mathbb{V}^{d}_{X},d_{\alpha}) $
by a bounded linear projection defined on $\hat{\otimes}^d_{\alpha}{X}$.
\end{enumerate}
\end{proposition}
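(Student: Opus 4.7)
The plan is to derive both parts from the tensor norm machinery already in place, chiefly the metric mapping property, the injectivity of $\epsilon$, Theorem \ref{thm: metrics concide}, and Lemma \ref{lem: tesis-ult al rescate}.

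For part (1), I would prove the two inequalities of the bi-Lipschitz statement separately on the diagonal. Applying the metric mapping property to the inclusion $\iota:Z\hookrightarrow X$ (a norm-one operator) produces $\iota^{\otimes d}:\hat{\otimes}^d_{\alpha}Z\to\hat{\otimes}^d_{\alpha}X$ of norm at most one, which yields
\[
d_{\alpha}^{X}(w,z)\leq d_{\alpha}^{Z}(w,z)\qquad\text{for every }w,z\in\mathbb{V}^d_Z.
\]
For the reverse inequality I would route through the injective norm. By the injectivity of $\epsilon$, the embedding $\hat{\otimes}^d_{\epsilon}Z\hookrightarrow\hat{\otimes}^d_{\epsilon}X$ is isometric, so $d_{\epsilon}^{Z}=d_{\epsilon}^{X}$ on $\mathbb{V}^d_Z$. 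Combining this with Theorem \ref{thm: metrics concide} applied to $Z$ and with the inequality $\epsilon\leq\alpha$ from \eqref{eq: inject proj tensor} applied to $X$ gives
\[
d_{\alpha}^{Z}(w,z)\leq 2^{d-1}d_{\epsilon}^{Z}(w,z)=2^{d-1}d_{\epsilon}^{X}(w,z)\leq 2^{d-1}d_{\alpha}^{X}(w,z).
\]
In the special case $\alpha=\epsilon$, the chain collapses to an equality, producing the promised isometric embedding. Closedness then follows from Lemma \ref{lem: tesis-ult al rescate}: given $z_n\in Z$ with $z_n\otimes\cdots\otimes z_n$ converging in $(\mathbb{V}^d_X,d_\alpha)$ to $x\otimes\cdots\otimes x$, a subsequence $z_{n_j}$ converges in $X$ to some $u$ with $u\otimes\cdots\otimes u=x\otimes\cdots\otimes x$, and closedness of $Z$ forces $u\in Z$.

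For part (2), I would start from a bounded linear projection $P\in\mathcal{L}(X,X)$ with range $Z$. The metric mapping property supplies a bounded extension $P^{\otimes d}:\hat{\otimes}^d_{\alpha}X\to\hat{\otimes}^d_{\alpha}X$ of norm at most $\|P\|^d$; from $P^2=P$ on $X$ one gets $(P^{\otimes d})^2=P^{\otimes d}$ on elementary tensors, and by linearity and continuity everywhere, so $P^{\otimes d}$ is a bounded linear projection. On the diagonal, $P^{\otimes d}(x\otimes\cdots\otimes x)=(Px)\otimes\cdots\otimes(Px)\in\mathbb{V}^d_Z$, while every $z\otimes\cdots\otimes z\in\mathbb{V}^d_Z$ is fixed by $P^{\otimes d}$ since $Pz=z$; hence $P^{\otimes d}(\mathbb{V}^d_X)=\mathbb{V}^d_Z$.

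The work is largely a bookkeeping exercise on top of the preceding results. The one delicate step is the closedness argument in (1): convergence of a diagonal sequence in $\hat{\otimes}^d_{\alpha}X$ does not \emph{a priori} transfer to convergence of the generators in $X$, and it is precisely Lemma \ref{lem: tesis-ult al rescate} — with the caveat that one must pass to a subsequence — that makes this transfer possible and lets closedness of the linear subspace $Z$ do its job.
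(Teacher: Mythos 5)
Your proof is correct and runs along essentially the same lines as the paper's: the norm-one extension $\iota^{\otimes d}$ (via the metric mapping property) for the forward inequality, the injectivity of $\epsilon$ plus the equivalence of reasonable cross norms for the reverse, and $\Pi^{\otimes d}$ for part (2). That said, your write-up is cleaner in two spots. First, by using $\epsilon\leq\alpha$ directly from inequality \eqref{eq: inject proj tensor} on $\hat{\otimes}^d_\alpha X$ instead of a second appeal to Theorem \ref{thm: metrics concide}, your chain
\[
d_{\alpha}^{Z}(w,z)\leq 2^{d-1}d_{\epsilon}^{Z}(w,z)=2^{d-1}d_{\epsilon}^{X}(w,z)\leq 2^{d-1}d_{\alpha}^{X}(w,z)
\]
lands exactly on the advertised constant $2^{d-1}$; the paper's shorthand ``follows by the bi-Lipschitz equivalences'' would, if applied naively at both ends, only yield $2^{2(d-1)}$. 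Second, the paper's written proof never explicitly establishes closedness (it is recoverable from the completeness of $(\mathbb{V}^d_Z,d_\alpha)$ in Theorem \ref{thm: metrics concide}(ii) combined with the bi-Lipschitz embedding), whereas your passage through Lemma \ref{lem: tesis-ult al rescate} plus closedness of $Z$ in $X$ makes that step explicit. One small genuine difference: the paper invokes Lemma \ref{lem: separacion de puntos} to verify injectivity of $J^{\otimes d}$ on the diagonal before discussing the inverse, while in your argument injectivity is automatic from the two-sided Lipschitz estimate, so that lemma is not needed.
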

\begin{proof}
To prove $(1)$, let $J$ be the inclusion mapping $J:Z \hookrightarrow X$.
Then,   ${J}^{\otimes d}\in\mathcal{L}(\hat{\otimes}^d_{\alpha}{Z},\hat{\otimes}^d_{\alpha}{X})$ and $\|J^{\otimes d}\|\leq 1. $ It is immediate to  see that  $J^{\otimes d}(\mathbb{V}^{d}_{Z})\subset \mathbb{V}^{d}_{X}. $ Now  we check that  the map ${{J}^{\otimes d}}$ restricted  to $ {\mathbb{V}^{d}_{Z}}$ is injective. Let $z_1, z_2 \in Z$, then
\begin{align*}
{J}^{\otimes d}(z_1\otimes \cdots \otimes z_1)={J}^{\otimes d}(z_2\otimes \cdots \otimes z_2) \hspace{.5cm} \Longleftrightarrow  \hspace{.5cm}
J(z_1)\otimes \cdots \otimes J(z_1)= J(z_2)\otimes \cdots \otimes J(z_2)
\end{align*}
By  Lemma \ref{lem: separacion de puntos}, this implies that $J(z_1)=\lambda J(z_2) $ with $\lambda^d=1$. Since $J$ is injective,  it must be $z_1=\lambda z_2$ and consequently $z_1\otimes \cdots \otimes z_1=z_2\otimes \cdots \otimes z_2$.  Using $\|J^{\otimes d}\|\leq 1$, we see that the restriction of $J^{\otimes d}$ to $\mathbb{V}^{d}_{Z} $ is a $1$-Lipschitz mapping.
To prove that   that the inverse map defined on the range  is Lipschitz,  we consider the injective tensor norm $\epsilon$. The injectivity property of this norm  establishes that
${J}^{\otimes d}: \hat{\otimes}^d_{\epsilon}{Z} \rightarrow\hat{\otimes}^d_{\epsilon}{X}$ is an isometry onto its image \cite[4.3 Proposition]{DefFlo93}.  Consequently, the inverse mapping $({J}^{\otimes d})^{-1}$ is well defined in ${J}^{\otimes d} (\hat{\otimes}^d_{\epsilon}{Z})= \hat{\otimes}^d_{\epsilon}{J}({Z})$ and satisfies $({J}^{\otimes d})^{-1}=({J}^{-1})^{\otimes d}$ in this domain. Then,
the inclusion $(\mathbb{V}^{d}_{Z},d_{\epsilon}) \hookrightarrow (\mathbb{V}^{d}_{X},d_{\epsilon})$ is an isometric embedding.
The case of an arbitrary tensor norm $\alpha$ follows by using the  bi-Lipschitz equivalences proved in Theorem \ref{thm: metrics concide}.

To prove $(2)$, let $\Pi\in \mathcal{L}(X,X)$ be a bounded linear  projection onto $Z$. By the uniform property  (\ref{eq: tensor norm}) of the tensor norm $\alpha$, ${{\Pi}}^{\otimes d}$ is a well defined projection  on $\hat{\otimes}^d_{\alpha}{X}$ with  image
$\hat{\otimes}^d_{\alpha}Z$. Since   ${{\Pi}^{\otimes}}(x\otimes \cdots \otimes x)=\Pi(x)\otimes \cdots \otimes \Pi(x) \in \mathbb{V}^{d}_{Z}$,  we see by  evaluating on     every   $x\in Z $, that the image of this mapping is the whole space  $\mathbb{V}^{d}_{Z}$.
\end{proof}

 To work   with polynomials, it is often convenient to use  symmetric tensors.   Let   $\otimes_s^dX$ denote the algebraic symmetric tensor product of degree $d$.
The function
\begin{equation}\label{eq: symm tensor norm}
    \|u\|_{s, \pi}:=\inf \left\{ \sum_i \|x_i\|^d; \; x_i\in X \; \mbox{where } \; u=\sum_i x_i\otimes\stackrel{d}{\cdots}\otimes x_i \right\}
 \end{equation}
is a norm on $\otimes_s^dX$. It   is equivalent to the restriction of the projective tensor norm to the subspace  $\otimes_s^dX$ and satisfies that  $P(^dX)$ and $(\hat{\otimes}_{s,\pi}^dX)^*$ are isometric spaces
\cite[Corollary 2.1]{RyanThesis}.    From now on, we will consider the norm $\|\cdot\|_{s, \pi} $ on the space of  symmetric tensors  and we will denote the completition of the space with respect to this norm as
$\hat{\otimes}^d_{s,\pi}X$.
By  Theorem \ref{thm: metrics concide},   the {\sl diagonal set}  has an intrinsic metric structure, which  depends  only on the metric of  $X$ (up to a constant). Thanks to this, we can define:
\begin{definition}\label{def: hom Veron} Let   $X$ be a Banach space and $d\in \mathbb{N}$.   We will say that
 $(\mathbb{V}^d_{X}, d_{s,\pi})$ is the {\sl Veronese cone} {\sl of degree $d$} of the Banach space $X$. \end{definition}

\section{Factorization of   polynomials  through the Veronese cone}

We shall denote $\mathcal{P}( ^dX,Y)$
the space of $d$-homogeneous polynomials   and     $\mathcal{P}( ^dX)$ when $Y$  is the scalar field. They are Banach spaces with the norm $\|P\|=\sup\{\|P(x)\|;\, x\in X, \|x\| \leq 1\}$.     Given a homogeneous polynomial $P\in \mathcal{P}(^d X,Y)$, there is a unique bounded linear operator $T_{{P}}\in \mathcal{L}(\hat{\otimes}_{s,\pi}^d {X},Y)$ satisfying
${P}(x)=T_{{P}}(x\otimes\stackrel{d}{\ldots}\otimes x)$ for every  $x\in{X}$. This relation defines an isomorphism   between the spaces of mappings which is  isometric  when considering  the  norm defined in (\ref{eq: symm tensor norm}),

\begin{equation}\label{eq: isom isometry}
  \mathcal{P}(^d X,Y) \equiv \mathcal{L}(\hat{\otimes}^d_{s,\pi}X, Y)   \end{equation}

In the sequel, we will say that a polynomial  and a linear operator are associated with each other if they correspond under the isometry in (4).

\begin{theorem}\label{thm: in symmetric terms}
      Let  $P\in \mathcal{P}(^{d}{X},Y)$ and let  $T_{{P}}\in \mathcal{L}(\hat{\otimes}_{s,\pi}^d{X},Y)$ be the  linear operator associated with ${P}$. Then   ${T_{{P}}}_{|_{\mathbb{V}^d_{X}}}:\mathbb{V}^d_{X}\rightarrow Y$  is  a Lipschitz mapping
       and  $P$ factorizes as
       \begin{equation*}\label{eq: factorization}
  \begin{tikzcd}
    X\arrow{rr}{P} \arrow[swap]{dr}{\nu^d_{X}} & & Y \\
     & \mathbb{V}^d_{X}  \arrow[swap]{ur}{{T_{{P}}}_{|}} &
  \end{tikzcd}
\end{equation*}
 It holds that  $\|{T_{{P}}}_{|_{\mathbb{V}^d_{X}}}\|_{Lip}=  \|{P}\| $.
If  $\alpha$ is any reasonable cross-norm, then  $\|P\|\leq \|{T_{{P}}}_{|_{(\mathbb{V}^d_{ X},d_\alpha)}}\|_{Lip} \leq \frac{2^{d-1}d^d}{d!} \|{P}\| $
     \end{theorem}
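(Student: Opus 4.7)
The factorization $P = T_P|_{\mathbb{V}^d_X} \circ \nu^d_X$ is an immediate restatement of the defining identity $T_P(x^{\otimes d}) = P(x)$ together with $\nu^d_X(x) = x^{\otimes d}$, so commutativity of the triangle is free. Since $\mathbb{V}^d_X \subset \hat{\otimes}^d_{s,\pi} X$ and $T_P$ is linear and bounded on this completion, its restriction to $(\mathbb{V}^d_X, d_{s,\pi})$ is automatically Lipschitz with constant at most $\|T_P\|$, which equals $\|P\|$ by the isometry (\ref{eq: isom isometry}). This gives one direction of the equality $\|T_P|_{\mathbb{V}^d_X}\|_{Lip} = \|P\|$.

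For the reverse direction, and more generally for the lower bound $\|P\| \leq \|T_P|_{(\mathbb{V}^d_X, d_\alpha)}\|_{Lip}$ valid for every reasonable cross-norm $\alpha$, I would estimate, for any $x \in X$,
\[
 \|P(x)\|_Y = \|T_P|_{\mathbb{V}^d_X}(x^{\otimes d}) - T_P|_{\mathbb{V}^d_X}(0)\|_Y \leq \|T_P|_{\mathbb{V}^d_X}\|_{Lip} \cdot \|x^{\otimes d}\|_\alpha,
\]
and invoke the fact recalled in the excerpt that (i) and (ii) in the definition of a reasonable cross-norm are actually equalities on elementary tensors, which yields $\|x^{\otimes d}\|_\alpha = \|x\|^d$. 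For $\alpha = s,\pi$ the same identity holds via the duality $(\hat{\otimes}^d_{s,\pi} X)^* \equiv \mathcal{P}(^d X)$ by pairing against $(x^*)^d$ for norm-one $x^* \in X^*$. Taking the supremum over $\|x\| \leq 1$ gives $\|P\| \leq \|T_P|\|_{Lip}$ in both cases.

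For the upper bound $\|T_P|_{(\mathbb{V}^d_X, d_\alpha)}\|_{Lip} \leq \frac{2^{d-1} d^d}{d!}\|P\|$, the plan is to chain, for $w = x^{\otimes d}$ and $z = y^{\otimes d}$ in $\mathbb{V}^d_X$, three estimates:
\[
 \|T_P(w) - T_P(z)\|_Y \;\leq\; \|P\| \cdot \|w - z\|_{s,\pi} \;\leq\; \|P\| \cdot \tfrac{d^d}{d!}\, \|w - z\|_\pi \;\leq\; \|P\| \cdot \tfrac{d^d}{d!} \cdot 2^{d-1} \cdot d_\alpha(w, z).
\]
The first inequality is linearity of $T_P$ on $\hat{\otimes}^d_{s,\pi} X$ together with $\|T_P\| = \|P\|$; the second is the quantitative form (with Floret's constant $d^d/d!$) of the equivalence between the symmetric projective norm and the restriction of $\pi$ to $\otimes_s^d X$, already mentioned after (\ref{eq: symm tensor norm}), applied to the symmetric tensor $w - z$; the third is Theorem \ref{thm: metrics concide} applied to the two reasonable cross-norms $\pi$ and $\alpha$ on $\otimes^d X$, restricted to $\mathbb{V}^d_X$. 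The main obstacle is this middle step: $T_P$ is naturally defined only on the symmetric completion while the cross-norm $\alpha$ is defined on the full tensor product, and one must bridge the two frameworks before the uniform bi-Lipschitz equivalence of Theorem \ref{thm: metrics concide} can be invoked. Once this bridge is paid for with the factor $d^d/d!$, the disparity among arbitrary reasonable cross-norms is absorbed uniformly by the factor $2^{d-1}$, producing the composite constant of the statement.
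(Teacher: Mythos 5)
Your argument is correct and matches the paper's proof in all essentials: the Lipschitz bound $\|T_P|_{\mathbb{V}^d_X}\|_{Lip}\leq\|T_P\|=\|P\|$ by linearity, the reverse inequality by testing at $(x^{\otimes d},0)$ and using that reasonable cross-norms are multiplicative on elementary tensors, and the two-sided estimate for a general $\alpha$ by chaining the linear bound with the Floret-type constant $d^d/d!$ relating $\|\cdot\|_{s,\pi}$ to $\pi$ on symmetric tensors and the factor $2^{d-1}$ from Theorem \ref{thm: metrics concide}. Your lower-bound step is marginally more direct (it bypasses the characterization of $B_{\hat{\otimes}_{s,\pi}^d X}$ as the balanced convex hull of elementary tensors that the paper invokes from Ryan's thesis), but the decomposition of the argument and the provenance of each constant are the same.
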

\begin{proof}
 Consider   $P \in  {P}(^{ d}{X},Y)$ and its  associated  operator   $T_{{P}}\in \mathcal{L}(\hat{\otimes}_{s,\pi}^d{X},Y)$.
  Each mapping  in the diagram is well-defined
and  the diagram commutes. Let $u:=x\otimes\cdots\otimes x,\, v:=y\otimes\cdots\otimes y\in \mathbb{V}^d_{X}$.
 \begin{align*}
   \|{T_{{P}}}_{|_{\mathbb{V}^d_{X}}}(u)-{T_{{P}}}_{|_{\mathbb{V}^d_{X}}}(v)\|=\|T_{{P}}(u)-  T_{{P}}(v)\|\leq \|T_{{P}}\|\cdot \|u-v\|_{s,\pi}=\|T_{{P}}\|\cdot d_{s,\pi}(u,  v)=\|{P}\|\cdot d_{s,\pi}(u,  v).
 \end{align*}

 Then, $  \|{T_{{P}}}_{|_{\mathbb{V}^d_{X}}}\|_{Lip}\leq  \|{P}\|$.   By other hand,  the closed unit ball of
 the space $\hat{\otimes}_{s,\pi}^d{X}$ coincides with  the balanced convex hull of the set $\{x \otimes \cdots \otimes x,  x\in {X}, \|x\|\leq 1\} $ \cite[Proposition 2.2]{RyanThesis}. Using that
 \begin{align*}
 \|{P}\|&=\|{T_{P}}\|= \sup \left\{ \|T_{{P}}(v)\|;\; {{v\in \hat{\otimes}_{s,\pi}^d{X}}},\; \|v\|_{s, \pi}\leq 1;  \right\}
 =\sup \left\{ \|T_{{P}}(x\otimes \cdots \otimes x)\|; \,  \|x\|\leq 1  \right\}
 \\
&\leq \sup \left\{ \|T_{{P}}(x\otimes \cdots \otimes x)- T_{{P}}(y\otimes \cdots \otimes y)\|; \; d_{s,\pi}(x\otimes \cdots \otimes x, y\otimes \cdots \otimes y)\leq 1 \| \right\} \leq   \|{T_{{P}}}_{|_{\mathbb{V}^d_{X}}}\|_{Lip}
 \end{align*}
we obtain the equality.  Now consider the case where $T_P$ is continuous with respect to the $\alpha$-norm and  $\hat{\otimes}_{s,\alpha}^d{X}$ denotes the closure with respect to $\alpha$ of the symmetric tensors.  By the result we just proved, along with     Theorem \ref{thm: metrics concide} and  \cite[Proposition 2.2]{RyanThesis}
\begin{multline*}
 \|P\|=\sup_{u\neq v}\frac{\|T_P(u)-T_P(v)\|}{\|u-v\|_{s,\pi}} \leq  \sup_{u\neq v}\frac{\|T_P(u)-T_P(v)\|}{\|u-v\|_{\pi}}  \leq  \sup_{u\neq v}\frac{\|T_P(u)-T_P(v)\|}{\|u-v\|_{d_{\alpha}}} \\
  =  {\|{T_{{P}}}_{|_{(\mathbb{V}^d_{ X},d_\alpha)}}\|_{Lip}}\leq \sup_{u\neq v}\frac{2^{d-1}\|T_P(u)-T_P(v)\|}{\|u-v\|_{\pi}} \leq \frac{2^{d-1}d^d}{d!}\sup_{u\neq v}\frac{\|T_P(u)-T_P(v)\|}{\|u-v\|_{s,\pi}}=\frac{2^{d-1}d^d}{d!}\|P\|.
\end{multline*}
\end{proof}

Note that  even if   $T_{{P}}$ is not continuous with respect to the $\alpha$-norm on  ${\otimes}_{s}^d{X}$, the mapping  ${T_{{P}}}_{|_{(\mathbb{V}^d_{ X},d_\alpha)}}$ is well defined and Lipschitz.

  We will say that a  mapping $f: \mathbb{V}^d_{X} \rightarrow Y $ is a {\sl $\mathbb{V}$}{\sl -operator} if there exist $ T\in \mathcal{L}(\hat{\otimes}_{s,\pi}^d{X},Y)$  such that $f={T}_{|_{\mathbb{V}^d_{X} }}$. Defining   $\mathcal{L}(\mathbb{V}^d_{X} ; Y):=\{f={T}_{|_{\mathbb{V}^d_{X} }}; \; f \, \mbox{is a}\, \mathbb{V}\mbox{-operator} \}$ with the Lipschitz norm, it holds that the space
 $(\mathcal{L}(\mathbb{V}^d_{X} ; Y),\|\cdot\|_{Lip})$ is  isometric   to $
 P(^{ d}X,Y)$.  Indeed, it holds that

 \begin{proposition}\label{prop: V-operators in Lipschitz} With the notation in Theorem \ref{thm: in symmetric terms}
\begin{enumerate}
\item Let $\Psi: {P}(^{d}{X},Y) \rightarrow \mathcal{L}(\mathbb{V}^d_{X} ; Y)$ be defined as  $\Psi(P): ={T_{{P}}}_{|_{\mathbb{V}^d_{X}}}$. Then $\Psi$  is an isometric isomorphism. It is  an isomorphic  isomorphism in the case    $\mathcal{L}((\mathbb{V}^d_{{X}}, d_{\alpha}) ; Y)$
\item  The inclusion map $\varphi :  \mathcal{L}((\mathbb{V}^d_{{X}}, d_{\alpha}), Y)  \hookrightarrow  Lip_0((\mathbb{V}^d_{{X}}, d_{\alpha}),Y) $ is an isometric (linear) inclusion
\end{enumerate}
 \end{proposition}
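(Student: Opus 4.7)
The plan is to deduce both parts essentially by unpacking definitions and invoking the norm identities already proved in Theorem \ref{thm: in symmetric terms}. For part (1), I start by noting that $\Psi$ is well-defined and linear: well-definedness is exactly the content of Theorem \ref{thm: in symmetric terms} (which says $T_P|_{\mathbb{V}^d_X}$ is Lipschitz, hence a $\mathbb{V}$-operator), and linearity follows because $P \mapsto T_P$ is the linear isometry in (\ref{eq: isom isometry}) and restriction is itself linear. The isometric identity $\|\Psi(P)\|_{Lip} = \|P\|$ is the first conclusion of that same theorem, so in particular $\Psi$ is injective. For surjectivity, given $f = T|_{\mathbb{V}^d_X} \in \mathcal{L}(\mathbb{V}^d_X;Y)$, I set $P := T \circ \nu^d_X$; by Lemma \ref{lem: veronese map is continuous} this is a continuous $d$-homogeneous polynomial, and its associated linear operator is precisely $T$, so $\Psi(P) = f$.

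The second assertion of part (1) is obtained by replacing the metric $d_{s,\pi}$ by $d_\alpha$ on $\mathbb{V}^d_X$: the bi-Lipschitz equivalence established in Theorem \ref{thm: metrics concide}, together with the two-sided inequality at the end of Theorem \ref{thm: in symmetric terms}, turns the isometry above into an isomorphism of normed spaces (no longer isometric in general) with constants depending only on $d$.

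For part (2), I take the basepoint $0 = 0 \otimes \cdots \otimes 0 \in \mathbb{V}^d_X$. Every $\mathbb{V}$-operator $f = T|_{\mathbb{V}^d_X}$ vanishes there because $T$ is linear, so $\mathcal{L}((\mathbb{V}^d_X, d_\alpha); Y) \subset Lip_0((\mathbb{V}^d_X, d_\alpha), Y)$. The inclusion is patently linear, and on both sides the norm is the Lipschitz constant $\sup_{u \neq v}\|f(u)-f(v)\|/d_\alpha(u,v)$, so it is preserved without change.

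The only step that needs care is showing that $\Psi^{-1}$ is unambiguously defined, i.e. that two bounded operators on $\hat{\otimes}^d_{s,\pi}X$ agreeing on $\mathbb{V}^d_X$ must coincide. I will handle this by observing that the linear span of $\{x \otimes \cdots \otimes x : x \in X\}$ is all of $\otimes_s^d X$ (via the polarization identity), which is norm-dense in $\hat{\otimes}^d_{s,\pi}X$. Beyond this small check, the whole proposition is a routine dictionary between polynomials and $\mathbb{V}$-operators supplied by the isometry (\ref{eq: isom isometry}) together with Theorems \ref{thm: metrics concide} and \ref{thm: in symmetric terms}.
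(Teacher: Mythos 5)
Your proof is correct and follows exactly the route the paper has in mind: the paper states this proposition without a separate proof, treating it as an immediate consequence of Theorem \ref{thm: in symmetric terms}, the isometry (\ref{eq: isom isometry}), and Theorem \ref{thm: metrics concide}, which is precisely the dictionary you unpack. Your closing remark about well-definedness of $\Psi^{-1}$ is in fact already absorbed into the uniqueness clause of (\ref{eq: isom isometry}) (two operators on $\hat{\otimes}^d_{s,\pi}X$ agreeing on $\mathbb{V}^d_X$ coincide on its span $\otimes_s^d X$, which is dense), so no further argument is needed there.
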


  \begin{remark} \label{rmk: facotriz linear case} In the case of  $1$-homogeneous polynomials,   the factorization in Theorem \ref{thm: in symmetric terms}    is the  trivial one: in this case $\mathbb{V}^1_{X}=X$, $\nu_X^1=Id$,  ${P}(^{1}{X},Y)=\mathcal{L}(X, Y)$,   and $P={T_{{P}}}={T_{{P}}}_{|_{\mathbb{V}^d_{X}}}\in{P}(^{1}{X},Y). $
  \end{remark}

\section{Lipschitz $q$-summability  for polynomials}

From now on, the Banach spaces $X$ and $Y$ will be   real spaces. In  \cite{FJ09},  it is introduced the   Lipschitz $q$-summing  norm $\pi_q^L(f)$ ($1\leq q <\infty$)  of a (possibly nonlinear) mapping  between  metric spaces $M, N$
 as    the smallest constant $C$ so that for all $(u_i)_i$, $(v_i)_i$ in $M$
 \begin{equation}\label{eq: FarJ}
 \sum d_N(f(u_i), f(v_i))^q\leq C^q \sup_{h\in B_{{M}^{\#}}} \sum |h(u_i)-h(v_i)|^q,
  \end{equation}
 where $ B_{{M}^{\#}}$ is the unit ball of the Lipschitz dual of $M$. Thus, ${M}^{\#}$
is the space
of all real valued Lipschitz functions  defined on $M$ that are zero at $0$, where $0 \in M$ is a fixed point in $M$ and $d_M$ denotes the metric.  A map $f$ is said to be  Lipschitz $q$-summing when  such $\pi_q^L(f)$ exists.

By other hand, various definitions have appeared in the literature that  generalize the notion of  absolutely $q$-summing  linear operators for homogeneous polynomials.   In what follows, we will refer to the following one, which was introduced  in \cite[Definition 7.3]{AngFU}. This class of polynomials   can be characterized by   a domination condition   and by a factorization diagram   \cite[Theorem 7.2]{AngFU}, which generalize the corresponding linear results (for the linear statments, see e.g. \cite[Theorem 2.12 and Theorem 2.13]{DJT}).

A $d$-homogeneous polynomial $P\in\mathcal{P}(^d X, Y)$   between
Banach spaces $X,Y$   is  a {\sl Lipschitz $q$-summing polynomial},  $1\leq q < \infty$,   if there exists   $c>0$ such that for $k\in\mathbb{N}$, $i=1,\ldots,k$ and  every  $x_i, y_i \in X$,
 \begin{equation} \label{eq: def pol -p-sumante}
  \sum_{i=1}^k \left\|P\left(x_i\right)-P\left(y_i\right)\right\|^q
  \leq    c^q\cdot \sup_{p \in B_{\mathcal{P}(^{ d} {X})}} \sum_{i=1}^k \left|{{p}}\left(x_i\right)-{p}\left(y_i\right)\right|^q \end{equation}
 The best $c$ above is denoted  $\pi^{Lip}_q(P)$.

Our goal now is to prove that for a homogeneous polynomial, both properties become equivalent.
Before stating  the result precisely, it is worth making a few comments concerning $q$-summability  for  polynomials.  Note that a function $f$ that satisfies    (\ref{eq: FarJ}) must be a Lipschitz map with    $\pi_q^L(f)$   a  Lipschitz constant for it.
 Since non-linear homogeneous polynomials  $P:X\rightarrow Y$ are not Lipschitz mappings,  they are never   Lipschitz $q$-summing maps with $M=X$ and $N=Y$  as in (\ref{eq: FarJ}). To overcome this difficulty, we will make use of Theorem \ref{thm: in symmetric terms}, which allows us to unambiguously use  the Lipschitz theory into the study of polynomials.

\begin{theorem}\label{thm: Lip=Lip} Let  $1\leq q < \infty$,  $X$ and  $Y$ be  real Banach spaces and let  $P:X \rightarrow Y$   be a  $d$-homogeneous polynomial. Then, $P$   is  a Lipschitz $q$-summing polynomial if and only if its associated Lipschitz mapping   $T_{P_|}:\mathbb{V}_X ^d\rightarrow Y$ satisfies (\ref{eq: FarJ}) for some  $C>0$.
In this case   $\pi^{Lip}_{q}(P)= \pi_q^L(T_{P_|})$.
\end{theorem}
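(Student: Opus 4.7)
The plan is to establish the two inequalities $\pi^L_q(T_{P_|}) \le \pi^{Lip}_q(P)$ and $\pi^{Lip}_q(P) \le \pi^L_q(T_{P_|})$. Both rest on the identity $\|P(x_i)-P(y_i)\| = \|T_{P_|}(x_i^{\otimes d})-T_{P_|}(y_i^{\otimes d})\|$ provided by the factorization $P=T_{P_|}\circ\nu_X^d$ of Theorem \ref{thm: in symmetric terms}, which makes the left-hand sides of the polynomial summing inequality \eqref{eq: def pol -p-sumante} and of the Farmer--Johnson inequality \eqref{eq: FarJ} (evaluated at the tensor-power pairs $(x_i^{\otimes d},y_i^{\otimes d})\in\mathbb{V}_X^d$) coincide. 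The comparison therefore reduces to the two suprema on the right-hand sides.

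For the direction $\pi^L_q(T_{P_|})\le \pi^{Lip}_q(P)$, I would apply Theorem \ref{thm: in symmetric terms} with $Y=\mathbb{R}$ to each scalar polynomial $p\in B_{\mathcal{P}(^dX)}$: the resulting map $T_{p_|}$ lies in $B_{(\mathbb{V}_X^d)^{\#}}$ (its Lipschitz norm equals $\|p\|\le 1$, and $T_{p_|}(0)=p(0)=0$ since $d\ge 1$) and satisfies $T_{p_|}(x^{\otimes d})=p(x)$. Hence
\[
\sup_{p\in B_{\mathcal{P}(^dX)}}\sum_i|p(x_i)-p(y_i)|^q \;\le\; \sup_{h\in B_{(\mathbb{V}_X^d)^{\#}}}\sum_i|h(x_i^{\otimes d})-h(y_i^{\otimes d})|^q,
\]
and the polynomial summing inequality with constant $\pi^{Lip}_q(P)$ upgrades directly to \eqref{eq: FarJ} for $T_{P_|}$ with the same constant.

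For the reverse inequality $\pi^{Lip}_q(P)\le \pi^L_q(T_{P_|})$, which is the substantive direction, I would invoke the Pietsch-type domination theorem of Farmer--Johnson: if $T_{P_|}$ satisfies \eqref{eq: FarJ} with constant $C$, there exists a Borel probability measure $\mu$ on the $w^*$-compact ball $B_{(\mathbb{V}_X^d)^{\#}}$ with $\|T_{P_|}(u)-T_{P_|}(v)\|^q\le C^q\int |h(u)-h(v)|^q\,d\mu(h)$ for all $u,v\in\mathbb{V}_X^d$. Specializing to $u=x^{\otimes d}$, $v=y^{\otimes d}$, it suffices to convert this Lipschitz integral domination into a polynomial one of the form $\|P(x)-P(y)\|^q \le C^q\int|p(x)-p(y)|^q\,d\nu(p)$ for a probability $\nu$ on $B_{\mathcal{P}(^dX)}$; by the Pietsch-type characterization in \cite[Theorem 7.2]{AngFU}, this is equivalent to $P$ being polynomial Lip-$q$-summing with constant at most $C$.

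The main obstacle is precisely this conversion step: $B_{(\mathbb{V}_X^d)^{\#}}$ strictly contains the image of $B_{\mathcal{P}(^dX)}$ under $p\mapsto T_{p_|}$, so $\mu$ need not be supported on polynomials. I would attempt two complementary routes. The first is to compose Farmer--Johnson's Lipschitz factorization of $T_{P_|}$ through the natural inclusion $i_{\infty,q}:L_\infty(\mu)\hookrightarrow L_q(\mu)$ with the Veronese map $\nu_X^d$, and verify that the resulting factorization of $P$ matches the polynomial factorization diagram of \cite[Theorem 7.2]{AngFU} while preserving the constant $C$. The second is to argue directly that, for finite configurations of tensor-power pairs, the Lipschitz supremum in fact coincides with the polynomial one, by exploiting that $\mathbb{V}_X^d$ linearly generates $\hat{\otimes}_{s,\pi}^dX$ together with the isometric identification $\mathcal{P}(^dX)\equiv(\hat{\otimes}_{s,\pi}^dX)^*$ from \eqref{eq: isom isometry}.
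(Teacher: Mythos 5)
Your easy direction, $\pi^L_q(T_{P_|}) \le \pi^{Lip}_q(P)$, is correct and matches what the paper does: the identification of Proposition \ref{prop: V-operators in Lipschitz} sends $B_{\mathcal{P}(^dX)}$ isometrically into $B_{(\mathbb{V}_X^d)^{\#}}$, so the polynomial supremum on the right of \eqref{eq: def pol -p-sumante} is dominated by the Lipschitz supremum on the right of \eqref{eq: FarJ}, and the identity $P = T_{P_|}\circ\nu_X^d$ transfers the constant.

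The hard direction is where both of your proposed routes break down. Route~2 is false: $B_{(\mathbb{V}_X^d)^{\#}}$ is strictly larger than the image of $B_{\mathcal{P}(^dX)}$, and the two suprema genuinely differ for fixed finite configurations, already for $d=1$. (In $\ell_2^2$ take $x_1=e_1$, $x_2=e_2$, $y_1=y_2=0$, $q=1$: the linear supremum is $\sqrt 2$, while a $1$-Lipschitz $h$ with $h(0)=0$, $h(e_1)=h(e_2)=1$ gives Lipschitz supremum $2$.) Equality of the two $q$-summing norms, even in the linear case of \cite[Theorem 2]{FJ09}, is an equality of suprema over \emph{all} configurations, not an equality term-by-term, so the fact that $\mathbb{V}_X^d$ linearly spans $\hat\otimes^d_{s,\pi}X$ does not help. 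Route~1 as you state it also does not close the gap: the Farmer--Johnson factorization $T_{P_|} = \beta\circ i_{\infty,q}\circ\alpha$ has \emph{Lipschitz} maps $\alpha$ and $\beta$, and the composition $\alpha\circ\nu_X^d : X\to L_\infty(\mu)$ is not a polynomial, so the diagram you obtain by precomposing with $\nu_X^d$ is not one of the factorization diagrams admissible in the characterization of \cite[Theorem 7.2]{AngFU}.

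The missing idea is the differentiation (linearization) step. The paper first localizes to finite-dimensional subspaces $E\subset X$ (Lemma \ref{lem: pol Lip es local}) and, via Proposition \ref{prop: Lip-Lip} together with \cite[Theorem 2.3]{LipLip23}, pushes the Pietsch measure from $B_{(\mathbb{V}^d_{E,\gamma})^{\#}}$ up to the ambient space $B_{(\hat\otimes^d_{s,\gamma}E)^{\#}}$, with a translation parameter $w_0$. This is precisely what allows one to apply the differentiability theory for Lipschitz maps on finite-dimensional Banach spaces (\cite{BenyLind}, Prop.~4.3 and 6.41), as in \cite[Theorem 2]{FJ09} and \cite[Theorem 2.1]{LipLip23}: for a.e.\ $w_0$ the shifted map $\alpha_{w_0}$ has a weak$^*$ derivative $D_0^{w^*}(\alpha_{w_0})$ at $0$, which is a \emph{bounded linear} operator $\hat\otimes^d_{s,\gamma}E\to L_\infty(\mu_{w_0})$. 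Replacing $\alpha_{w_0}$ by this linear derivative and taking a cluster point of the rescalings $\beta_n(w)=n\tilde\beta(w/n)$ yields a factorization $JP=\beta_0\, i_{\infty,q}\, P_D$ through an honest polynomial $P_D$, from which the $q$-summing estimate $\Pi_q(i_{\infty,q})=1$ gives the polynomial domination with the correct constant. Without this localization and differentiation argument, no route from \eqref{eq: FarJ} back to a polynomial Pietsch domination is available; this is the substantive content of the theorem and it is absent from your proposal.
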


The   case $d=1$  in Theorem \ref{thm: Lip=Lip} is the result for linear operators proved in \cite[Theorem 2]{FJ09}, since in this case
$\mathbb{V}^1_{X}=X$  and $P={T_{{P}}}={T_{{P}}}_{|_{\mathbb{V}^d_{X}}}\in{P}(^{1}{X},Y),$ as noted in Remark \ref{rmk: facotriz linear case}.

To prove the theorem, we will fix the notation and prove some results that we will need.
Given a finite dimensional subspace $E$ of $X$, $\gamma$ will   denote the  norm on $\otimes_{s}^dE$ induced by the inclusion in $\hat{\otimes}_{s,\pi}X$;   $\mathcal{P}_{\gamma}(^d E)$  will denote the  space of $d$-homogeneous polynomials,  with the norm induced  by  considering it isometric to $({\otimes}^d_{s,\gamma}E)^*$ and  $\mathbb{V}^d_{E, \gamma}:=(\mathbb{V}^d_E, d_\gamma)$.

\begin{lemma}\label{lem: pol Lip es local}   Let $P\in \mathcal{P}(^dX, Y)$ be  fixed. We will denote  $f_P$ the $\mathbb{V}$-operator associated to $P$ defined on $\mathbb{V}^d_{X}$ given by Theorem \ref{thm: in symmetric terms}.   Then
$$\pi^L_p(f_P)=\sup_{\{E\subset X\}}\{\pi^L_q({f_P}_{|\mathbb{V}^d_{E, \gamma}})\} \quad {\mbox{and}} \quad \pi^{Lip}_q(P)=\sup_{\{E\subset X\}}\{\pi^{Lip,\gamma}_q({P}_{|{E}})\}$$
where $\pi^{Lip,\gamma}_q({P}_{|{E}})$ denotes the best $c$ in  (\ref{eq: def pol -p-sumante}) when the supremum is taken over the unit ball of the scalar valued polynomials defined on $E$, with the  norm induced by $(\otimes_{s,\gamma}^d E)^*$.

\end{lemma}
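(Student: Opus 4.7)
The plan is to prove both equalities by a standard localization argument that runs in parallel for the polynomial and the Lipschitz summing norms. Two structural facts do the heavy lifting. First, since $\gamma$ is by definition the norm on $\otimes^d_s E$ induced by the inclusion in $\hat{\otimes}^d_{s,\pi}X$, the completion $\hat{\otimes}^d_{s,\gamma}E$ sits as a closed isometric subspace of $\hat{\otimes}^d_{s,\pi}X$; consequently $\mathcal{P}_\gamma(^d E)\equiv (\hat{\otimes}^d_{s,\gamma}E)^*$. Second, the restriction of $d_{s,\pi}$ to $\mathbb{V}^d_E$ coincides with $d_\gamma$, so $(\mathbb{V}^d_{E,\gamma},d_\gamma)$ is a metric subspace of $(\mathbb{V}^d_X,d_{s,\pi})$ that contains the distinguished base point $0$.

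For the polynomial identity, the inequality $\pi^{Lip,\gamma}_q(P_{|E})\leq \pi^{Lip}_q(P)$ is read off directly from (\ref{eq: def pol -p-sumante}) applied to $x_i,y_i\in E$: every $p\in B_{\mathcal{P}(^d X)}$ restricts to an element of $B_{\mathcal{P}_\gamma(^d E)}$, so the right-hand side of (\ref{eq: def pol -p-sumante}) for $P$ is bounded above by the corresponding right-hand side for $P_{|E}$. For the reverse direction, given any finite system $\{x_i\},\{y_i\}\subset X$, set $E:=\mathrm{span}\{x_i,y_i\}$; Hahn--Banach, applied to the isometric inclusion $\hat{\otimes}^d_{s,\gamma}E\hookrightarrow \hat{\otimes}^d_{s,\pi}X$, says that every $p\in B_{\mathcal{P}_\gamma(^d E)}$ is the restriction of some $\tilde p\in B_{\mathcal{P}(^d X)}$, whence
$$\sup_{p\in B_{\mathcal{P}_\gamma(^d E)}}\sum_i|p(x_i)-p(y_i)|^q=\sup_{p\in B_{\mathcal{P}(^d X)}}\sum_i|p(x_i)-p(y_i)|^q.$$
Applying (\ref{eq: def pol -p-sumante}) to $P_{|E}$ on $E$ and then taking the supremum over finite-dimensional $E\subset X$ yields $\pi^{Lip}_q(P)\leq \sup_E\pi^{Lip,\gamma}_q(P_{|E})$.

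For the Lipschitz identity the argument is structurally identical; the role of Hahn--Banach is taken by the McShane--Whitney extension theorem for real-valued Lipschitz functions. The inequality $\pi^L_q({f_P}_{|\mathbb{V}^d_{E,\gamma}})\leq \pi^L_q(f_P)$ follows because every $h\in B_{(\mathbb{V}^d_X)^{\#}}$ restricts to a function in $B_{(\mathbb{V}^d_{E,\gamma})^{\#}}$ (restriction cannot enlarge the Lipschitz constant and $h(0)=0$ is preserved). Conversely, given $u_i=x_i\otimes\cdots\otimes x_i,\, v_i=y_i\otimes\cdots\otimes y_i\in\mathbb{V}^d_X$, put $E:=\mathrm{span}\{x_i,y_i\}$, so $u_i,v_i\in\mathbb{V}^d_{E,\gamma}$; McShane's formula extends each $h\in B_{(\mathbb{V}^d_{E,\gamma})^{\#}}$ to some $\tilde h\in B_{(\mathbb{V}^d_X)^{\#}}$ with $\tilde h_{|\mathbb{V}^d_{E,\gamma}}=h$, so the two suprema in (\ref{eq: FarJ}) coincide when evaluated on the points $u_i,v_i$. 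Applying (\ref{eq: FarJ}) to ${f_P}_{|\mathbb{V}^d_{E,\gamma}}$ and passing to the supremum over $E$ gives $\pi^L_q(f_P)\leq \sup_E\pi^L_q({f_P}_{|\mathbb{V}^d_{E,\gamma}})$.

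The only point requiring care -- really more bookkeeping than a genuine obstacle -- is that the local norm $\gamma$ is the one \emph{induced} from the ambient $\hat{\otimes}^d_{s,\pi}X$, and not, say, the intrinsic symmetric projective norm on $\otimes^d_s E$. It is precisely this choice that makes both the Hahn--Banach extension of scalar-valued polynomials and the metric-subspace identification $\mathbb{V}^d_{E,\gamma}\subset\mathbb{V}^d_X$ work without loss of constants, so that both supremum identities hold on the nose.
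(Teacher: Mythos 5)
Your proof is correct and follows the same route as the paper: the paper's proof is a compressed version of exactly this localization argument, noting that both summing norms are computed over finite systems of points, that the crucial care is matching the testing functionals, and that McShane extension (for Lipschitz functions on the isometric subspace $\mathbb{V}^d_{E,\gamma}\subset\mathbb{V}^d_X$) and Hahn--Banach (for elements of $(\hat{\otimes}^d_{s,\gamma}E)^*\equiv\mathcal{P}_\gamma(^dE)$) provide the isometric extensions that make the two suprema over test functionals coincide. You simply make explicit what the paper leaves implicit, including the two-sided inequality structure.
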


\begin{proof}
   Both  constants $\pi^L_q(f_P)$ and $\pi^{Lip}_q(P)$  are computed considering (all) finite sets of $\mathbb{V}^d_{X}$ and $X$, respectively. The only aspect to be careful with is  the set of scalar valued functions considered for the supremum, which are  the metric spaces $B_{{\mathbb{V}_{X}^d}^{\#}}$ and $B_{\mathcal{P}(^{ d} {X})}$, respectively.  In each  case, in order to preserve the  metric when considering a subspace $E$ instead of the whole space $X$,    it is necessary to  consider the   norm  $\gamma$. In this way  $\mathbb{V}^d_{E, \gamma}$ is an isometric subspace of $\mathbb{V}^d_{X}$.   The equalities  follows in both cases, extending the respective mappings isometrically, by McShane theorem and by Hanh-Banach theorem, respectively.

\end{proof}

For now on, we will fix a finite dimensional subspace $E\subset X$.

\begin{proposition}\label{prop: Lip-Lip}  Let       $f\in   {Lip}_0(\mathbb{V}^d_{{E,\gamma}}, Y)$ be a   Lipschitz $q$-summing mapping with constant $  \pi^L_q(f)$. For each $w_0\in \hat{\otimes}_{s,\gamma}^dE$ there exists a probability measure $\mu_{w_0} \in \mathcal{M}(B_{( \hat{\otimes}_{s,\gamma}^dE)^{\#}})$  such that    for any
$u:=x\otimes\cdots\otimes x, v=y\otimes\cdots\otimes y $ in  $\mathbb{V}^d_{{E, \gamma}}$
\begin{equation}\label{eq: Pietsch mu2}
 \|f(u)-f(v)\|^q \leq \pi^L_q(f)^q  \int_{B_{(\hat{\otimes}_{s,\gamma}^d E)^{\#}}}|\zeta(u+w_0)-\zeta(v+w_0)|^qd\mu_{w_0}(\zeta).
 \end{equation}

\end{proposition}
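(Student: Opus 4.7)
The plan is to reduce to the classical Pietsch-type domination for Lipschitz $q$-summing maps (Farmer–Johnson) applied directly to $f$ on $\mathbb{V}^d_{E,\gamma}$, then transport the resulting measure from the Lipschitz dual of $\mathbb{V}^d_{E,\gamma}$ to the Lipschitz dual of the ambient (symmetric tensor) space via McShane extension and a translation by $w_0$.

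First, since $f \in \mathrm{Lip}_0(\mathbb{V}^d_{E,\gamma}, Y)$ is Lipschitz $q$-summing with constant $\pi_q^L(f)$, the Farmer–Johnson domination theorem provides a probability measure $\nu$ on $B_{(\mathbb{V}^d_{E,\gamma})^{\#}}$ (equipped with the pointwise/weak-$\ast$ topology) such that
\begin{equation*}
\|f(u) - f(v)\|^q \leq \pi_q^L(f)^q \int_{B_{(\mathbb{V}^d_{E,\gamma})^{\#}}} |g(u) - g(v)|^q \, d\nu(g)
\end{equation*}
for all $u, v \in \mathbb{V}^d_{E,\gamma}$. Since $E$ is finite-dimensional, $\hat{\otimes}^d_{s,\gamma} E$ is finite-dimensional as well, which keeps all relevant unit balls weak-$\ast$ metrizable and sidesteps technical measure-theoretic headaches.

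Second, I would build a transport map $\Phi_{w_0} : B_{(\mathbb{V}^d_{E,\gamma})^{\#}} \to B_{(\hat{\otimes}^d_{s,\gamma} E)^{\#}}$. For each $g$ in the domain, define the McShane extension
\begin{equation*}
\tilde{g}(w) := \inf_{u \in \mathbb{V}^d_{E,\gamma}} \bigl[ g(u) + d_\gamma(w, u) \bigr],
\end{equation*}
which is a $1$-Lipschitz real-valued function on $\hat{\otimes}^d_{s,\gamma} E$ agreeing with $g$ on $\mathbb{V}^d_{E,\gamma}$. Then set
\begin{equation*}
\Phi_{w_0}(g)(w) := \tilde{g}(w - w_0) - \tilde{g}(-w_0).
\end{equation*}
The subtraction normalizes the value at $0$, so $\Phi_{w_0}(g)$ lies in $B_{(\hat{\otimes}^d_{s,\gamma} E)^{\#}}$. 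The explicit infimum formula makes $g \mapsto \tilde{g}$ a pointwise (hence Borel) measurable map in the weak-$\ast$ topologies, and post-composition with translation keeps it Borel measurable.

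Third, let $\mu_{w_0} := (\Phi_{w_0})_{\ast} \nu$ be the pushforward probability on $B_{(\hat{\otimes}^d_{s,\gamma} E)^{\#}}$. The key identity is the telescoping
\begin{equation*}
\Phi_{w_0}(g)(u + w_0) - \Phi_{w_0}(g)(v + w_0) = \tilde{g}(u) - \tilde{g}(v) = g(u) - g(v)
\end{equation*}
for $u, v \in \mathbb{V}^d_{E,\gamma}$, since $u, v$ lie in the set where $\tilde{g}$ coincides with $g$. Plugging this into the Farmer–Johnson inequality and using the change-of-variables formula for the pushforward yields the desired bound (\ref{eq: Pietsch mu2}).

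The main obstacle, in my view, is not the algebra (the telescoping identity is immediate) but ensuring that the Farmer–Johnson Pietsch domination is legitimately applicable on the metric space $\mathbb{V}^d_{E,\gamma}$ (which is not a linear space) and that the extension-translation map is measurable enough for the pushforward to make sense. The explicit McShane formula plus finite-dimensionality of $E$ handle both concerns: $\mathbb{V}^d_{E,\gamma}$ is a complete pointed metric space by Theorem \ref{thm: metrics concide}, which is exactly the setting of \cite{FJ09}, and the weak-$\ast$ Borel structure on the unit balls is well-behaved in finite dimension, so the pushforward $\mu_{w_0}$ is unambiguously defined.
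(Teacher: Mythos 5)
Your proof is correct and reaches the same conclusion, but by a genuinely different route from the paper. The paper applies Farmer--Johnson to get a Pietsch measure $\nu$ on $B_{(\mathbb{V}^d_{E,\gamma})^{\#}}$, considers the restriction map $\varphi\colon B_{(\hat\otimes^d_{s,\gamma}E)^{\#}}\to B_{(\mathbb{V}^d_{E,\gamma})^{\#}}$, observes it is surjective by McShane, and then \emph{twice} invokes an abstract transfer result \cite[Theorem 2.3]{LipLip23} — once to lift $\nu$ through $\varphi$ to a measure $\tilde\mu$ on the ambient Lipschitz dual ball, and once more through the ``translate-and-renormalize'' isometry $\varphi_1(\zeta)(w)=\zeta(w+w_0)-\zeta(w_0)$ to install the shift by $w_0$. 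You instead construct an explicit Borel \emph{section} of $\varphi$ via the McShane infimal-convolution formula, build in the $w_0$-shift and the normalization at $0$ in the same stroke, and define $\mu_{w_0}$ as a single pushforward; the telescoping identity $\Phi_{w_0}(g)(u+w_0)-\Phi_{w_0}(g)(v+w_0)=g(u)-g(v)$ then gives (\ref{eq: Pietsch mu2}) directly by change of variables. Your method avoids any external lifting theorem at the price of checking measurability of the section; your argument via upper semicontinuity of $g\mapsto\tilde g(w)$ (an infimum of weak-$\ast$ continuous functions, hence Borel) handles this, and finite-dimensionality of $E$ keeps the Borel structures on the dual balls benign. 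The paper's route delegates the measure-theoretic work entirely to the cited theorem and is perhaps safer in infinite dimensions; yours is more self-contained and transparent in the finite-dimensional setting actually at hand. One small polish: state explicitly that $0\in\mathbb{V}^d_{E,\gamma}$ and $g(0)=0$ so that $\tilde g(0)=0$, and note ``upper semicontinuous, hence Borel'' rather than ``pointwise, hence Borel,'' since the infimum degrades continuity to semicontinuity.
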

\begin{proof}

    If    $f\in   {Lip}_0(\mathbb{V}^d_{{E,\gamma}}, Y)$ is  a Lipschitz $q$-summing mapping, then  by \cite[Theorem 1]{FJ09},  there exists a   probability  measure     $\mu$  defined  on $ B_{(\mathbb{V}^d_{{E,\gamma}})^{\#}}$ such that  a domination inequality of the form   (\ref{eq: Pietsch mu2}) holds,    where  the integral is computed on $M^{\#}:=B_{(\mathbb{V}^d_{{E,\gamma}})^{\#}}$ with respect to $\mu$, and $w_0$ does not appear (in our context we can consider it as $0$).   Now, consider  the continuous restriction mapping
\[\begin{array}{ccc}
   \varphi:  B_{(\hat{\otimes}_{s,\gamma}^d E)^{\#}} & \rightarrow & B_{({\mathbb{V}^d_{{E,\gamma}}})^{\#}} \\
    \zeta  & \mapsto & \zeta_{|_{\mathbb{V}^d_{{E,\gamma}}}}.
  \end{array}
\]
By
McShane's theorem, $\varphi$ is surjective. For each pair
$u=x\otimes\cdots \otimes x, v=y\otimes\cdots \otimes y  $ in  ${\mathbb{V}^d_{{E,\gamma}}}$, define   $h_{u,v}(\eta):=\eta(u)-\eta(v) \in \mathcal{C}(B_{({\mathbb{V}^d_{{E,\gamma}}})^{\#}})$,
$Z:=\{h_{u,v}; \,u,v\in {\mathbb{V}^d_{{E,\gamma}}}\}$,  $F:Z\rightarrow Y$, $F(h_{u,v}):=f(u)-f(v)$ and $C:= \pi^L_q(f)$. We have that  $\varphi$, $Z$,  $F$ and $C$  satisfy the hypotheses  of    \cite[Theorem 2.3]{LipLip23}
with $K_1= B_{(\otimes^d_{s,\gamma}E)^{\#}}$, $K_2= B_{(\mathbb{V}_{E,\gamma}^d)^{\#}}$ and $\rho_2=\mu$.
Since $\varphi$ is surjective, this theorem ensures te existence of  a probability  measure $\tilde{\mu}$ on  $B_{(\otimes^d_{s,\gamma}E)^{\#}}$ satisfying  that  for $u,v \in {\mathbb{V}^d_{E,\gamma}} $   $$
\|f(u)-f(v)\|\leq \pi^L_q(f)^q \int_{B_{(\hat{\otimes}_{s,\gamma}^d E)^{\#}}}|\zeta(u)-\zeta(v)|^qd\tilde{\mu}(\zeta).$$
To finish the proof,   let  $w_0\in {\otimes}^d_{s,\gamma}{{E}}$ and consider
    the bijective isometry   $
  \varphi_1: B_{({\otimes}^d_{s,\gamma}{{E}})^{\#}}\rightarrow B_{({\otimes}^d_{s,\gamma}{{E}})^{\#}} $ defined as $\varphi_1(\zeta)(w):=\zeta(w+w_0)-\zeta(w_0)$.
  We  use   \cite[Theorem 2.3]{LipLip23} to deduce the existence of a probability measure
 $\mu_{w_0}\in \mathcal{M}((\hat{\otimes}^d_{s,\gamma} E)^{\#})$ such that for every $u,v \in \mathbb{V}^d_{{E,\gamma}}$
   \begin{equation*}\label{eq: Pietsch FJ 3}
  \|f(u)-f(v)\|^q \leq \pi^L_q(f)^q  \int_{B_{(\otimes^d_{s,\gamma}E)^{\#}}}  |\zeta(u+w_0)-\zeta(v+w_0)|^qd{\mu}_{w_0}(\zeta)
 \end{equation*}
 which is (\ref{eq: Pietsch mu2}).

\end{proof}

Given a probability measure  $\mu\in \mathcal{M}(B_{({\otimes}^d_{s,\gamma} E)^{\#}})$ and a fixed  $w_0\in \otimes^d_{s,\gamma} E$, we will denote  $\alpha_{w_0}:\widehat{\otimes}^d_{s,\gamma}  E\rightarrow  L_{\infty}(\mu)$ the mapping  given by the relation  $\alpha_{w_0}(w):=\alpha({w+w_0})-\alpha(w_0)$, where
$\alpha(w)$ is   the image of $w$ under  the natural isometric embedding  of $\otimes^d_{s,\gamma} E $ into $\mathcal{C}(B_{({\otimes}^d_{s,\gamma} E)^{\#}})$ composed with the natural inclusion  into $L_{\infty}(\mu)$, $  w\stackrel{\alpha}{\mapsto} [\delta_w]$.
The  natural inclusion from  $L_{\infty}(\mu)$ into $L_{q}(\mu)$ will be denoted  $i_{\infty,q}$.

\begin{lemma}\label{lem: Lip-factrz} Let $E\subset X $ be a finite dimensional space and let   $f_P\in   \mathcal{L}(\mathbb{V}^d_{E, \gamma}, Y)$  be a  $\mathbb{V}$-operator which is a  Lipschitz $q$-summing map,  that is, it satisfies  (\ref{eq: FarJ}). Let   $w_0\in {\otimes}^d_{s,\gamma} E$ be fixed. Then, for  every $\epsilon >0$ there exist   a subspace $Y_E\subset Y$,  a linear embedding $J:Y_{E}\hookrightarrow \ell_{\infty}^m$   with $\|J\|=1, \|J^{-1}\|\leq 1+\epsilon$
a probability measure $\mu_{w_0}\in \mathcal{M}(B_{({\otimes}^d_{s,\gamma} {E})^{\#}})$
and a Lipschitz mapping $\tilde{\beta}:L_{q}(\mu_{w_0})\rightarrow \ell_{\infty}^m$ such that $Lip({\widetilde{\beta}})\leq \pi^L_q({f_{P}}_|)$ and  the following diagram commutes

\hspace{1cm}\xymatrix{
\mathbb{V}^d_{\gamma} \ar[r]^ {f_P}\ar[d]^{{{\alpha_{w_0}}_{|_{\mathbb{V}^d_{\gamma}}}}} &{Y_E}\ar[r]^{J}& \ell_{\infty}^m  \\
L_{\infty}(\mu_{w_0}) \ar[rr]^ {i_{\infty,q}}&&  L_q(\mu_{w_0}) \ar[u]^{\widetilde{\beta}}. &
}
\end{lemma}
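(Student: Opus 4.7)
The plan is to combine three ingredients: (a) the Pietsch-type domination produced by Proposition \ref{prop: Lip-Lip}, (b) an almost-isometric embedding of a finite-dimensional subspace of $Y$ into $\ell_\infty^m$, and (c) a coordinate-wise McShane extension to obtain a Lipschitz map defined on all of $L_q(\mu_{w_0})$. The key point is that the domination inequality must be rewritten so that the right-hand side becomes a norm in $L_q(\mu_{w_0})$ measured through the map $i_{\infty,q}\circ\alpha_{w_0}|$, and then $\tilde\beta$ is built by first defining it on the image of $i_{\infty,q}\circ\alpha_{w_0}|$ and extending.

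Concretely, I would first apply Proposition \ref{prop: Lip-Lip} to $f_P$ and the fixed $w_0$, obtaining a probability measure $\mu_{w_0}\in\mathcal{M}(B_{(\hat\otimes^d_{s,\gamma}E)^{\#}})$ such that, for all $u,v\in\mathbb{V}^d_{E,\gamma}$,
\[
\|f_P(u)-f_P(v)\|_Y^q\le \pi^L_q(f_P)^q\int|\zeta(u+w_0)-\zeta(v+w_0)|^q\,d\mu_{w_0}(\zeta),
\]
and by the very definition of $\alpha_{w_0}$ the integral equals $\|\alpha_{w_0}(u)-\alpha_{w_0}(v)\|_{L_q(\mu_{w_0})}^q$. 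Next I would take $Y_E:=T_P(\hat\otimes^d_{s,\gamma}E)\subset Y$; because $E$ is finite-dimensional, so is $Y_E$, and it contains $f_P(\mathbb{V}^d_{E,\gamma})$. I would then embed $Y_E$ almost-isometrically into $\ell_\infty^m$ via the standard $\epsilon$-net construction: choose an $\epsilon$-net $\{y_1^{\ast},\dots,y_m^{\ast}\}$ of $B_{Y_E^{\ast}}$ and set $J(y):=(y_1^{\ast}(y),\dots,y_m^{\ast}(y))$, adjusting the normalization so that $\|J\|=1$ and $\|J^{-1}\|\le 1+\epsilon$.

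With these ingredients in place, I would define $\tilde\beta$ first on the image $S:=i_{\infty,q}(\alpha_{w_0}(\mathbb{V}^d_{E,\gamma}))\subset L_q(\mu_{w_0})$ by the rule $\tilde\beta(i_{\infty,q}(\alpha_{w_0}(u))):=J(f_P(u))$. The displayed domination together with $\|J\|\le 1$ gives both well-definedness (if $i_{\infty,q}(\alpha_{w_0}(u))=i_{\infty,q}(\alpha_{w_0}(v))$ then $f_P(u)=f_P(v)$) and the Lipschitz estimate $\mathrm{Lip}(\tilde\beta|_S)\le\pi^L_q(f_P)$. Finally, since $\ell_\infty^m$ has norm $\max_k|\cdot_k|$, the extension problem decouples into $m$ scalar Lipschitz extension problems, each solved by McShane's theorem without increasing the Lipschitz constant; this yields $\tilde\beta:L_q(\mu_{w_0})\to\ell_\infty^m$ with $\mathrm{Lip}(\tilde\beta)\le\pi^L_q(f_P)$, and commutativity of the diagram is built into the definition on $S$.

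The main obstacle I anticipate is bookkeeping around the shift by $w_0$: one must ensure that the measure produced by Proposition \ref{prop: Lip-Lip} is exactly the one through which $\alpha_{w_0}$ realizes the isometric embedding $\otimes^d_{s,\gamma}E\hookrightarrow L_\infty(\mu_{w_0})$, so that the Pietsch-type estimate matches, coordinate by coordinate, the $L_q(\mu_{w_0})$-pseudometric used to make $\tilde\beta$ Lipschitz on $S$. Once that identification is clean, the remaining steps are standard $\epsilon$-net / McShane machinery.
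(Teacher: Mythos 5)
Your proposal reproduces the paper's argument essentially verbatim: invoke Proposition \ref{prop: Lip-Lip} to obtain $\mu_{w_0}$ and the shifted Pietsch-type domination, take $Y_E:=T_P(\hat\otimes^d_{s,\gamma}E)$ with a $(1+\epsilon)$-embedding $J$ into $\ell_\infty^m$, define $\beta$ on the image $i_{\infty,q}\alpha_{w_0}(\mathbb{V}^d_{E,\gamma})$ by $\beta(i_{\infty,q}\alpha_{w_0}(u)):=Jf_P(u)$ (well-defined and Lipschitz by the domination), and extend to all of $L_q(\mu_{w_0})$ using the injectivity of $\ell_\infty^m$ (equivalently, coordinatewise McShane). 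The only difference is that you spell out the standard $\epsilon$-net construction of $J$ and the coordinatewise extension, which the paper simply cites; the substance is identical.
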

\begin{proof}

 {{ Let ${Y_E}$ be the  finite dimensional subspace ${T_{{P}}}({\otimes}^d_{s,\gamma}{{E}})$ of $Y$. For each $\epsilon >0$, let  $J:Y_E\hookrightarrow \ell_{\infty}^m$ be a linear embedding with $\|J\|=1, \|J^{-1}\|\leq 1+\epsilon$.
By Proposition \ref{prop: Lip-Lip},  there exists a probability measure   $\mu_{w_0} \in \mathcal{M}(B_{({\otimes}^d_{s,\gamma} E)^{\#}})$ such inequality (\ref{eq: Pietsch mu2}) holds. This fact allows us  to define ${\beta}( (i_{\infty,q}{\alpha_{w_0}})(u) :=Jf_P(u)$ for every $u\in \mathbb{V}^d_{E,\gamma}$.  Then,  we have the  factorization $Jf_P={\beta} i_{\infty,q} {\alpha_{w_0}}_{|_{\mathbb{V}^d_{\gamma}}}$  with $Lip({\alpha_{w_0}})\leq 1$, $Lip({\beta})\leq \pi^L_q(f_P)$. By the injectivity of   $\ell_{\infty}^m$ we can extend   ${\beta}$ to $L_q(\mu_{x_0})$  preserving its norm. This extension is $\tilde{\beta}$.
}}

\end{proof}

 \begin{proof}[Proof of Theorem \ref{thm: Lip=Lip}]
By Proposition  \ref{prop: V-operators in Lipschitz}  with $Y=\mathbb{R}$, we know that   $ {\mathcal{L}\left({\mathbb{V}^d_{X}} \right)}$ and   ${\mathcal{P}\left(^{d}{X}\right)}$ are isometric spaces. Using   the isometric inclusion   $ \varphi :B_{\mathcal{L}\left({\mathbb{V}^d_{X}} \right)} \hookrightarrow B_{({\mathbb{V}^d_{X}})^{\#}}$, we derive  that whenever $P$ satisfies (\ref{eq: def pol -p-sumante}) for some $c>0$,  its associated $\mathbb{V}$-operator  $f_P$  satisfies (\ref{eq: FarJ})  for the metric space ${\mathbb{V}^d_{X}}$ with the same  constant.

  To  prove the reverse implication, we fix a finite dimensional subspace $E\subset X$. As before,   let $\gamma$  be  the  reasonable cross-norm on $\otimes^d_s E$ induced by the inclusion in $\widehat{\otimes}^d_{s,\pi}X$.  Thus,  the assumption is  that  $f_P:\mathbb{V}^d_{\gamma, E}\rightarrow \mathbb{R}$ satisfies (\ref{eq: FarJ}) with constant $\pi^L_q(f_{P_{|\mathbb{V}_E}})$.

  \noindent
{\underline{Claim:}}
Given  a probability measure $\mu\in \mathcal{M}(B_{({\otimes}_{s,\gamma}^d E)^{\#}})$  let   $\alpha:\widehat{\otimes}^d_{s,\gamma}  E\rightarrow  L_{\infty}(\mu)$ be, as before,  the natural isometric embedding into $\mathcal{C}(B_{({\otimes}_{s,\gamma}^d E)^{\#}})$ composed with the natural inclusion into $L_{\infty}(\mu)$, and let $i_{\infty,q}$ be the natural inclusion from $L_{\infty}(\mu)$ into $L_{q}(\mu)$. Then, for a.e.   $w_0\in \widehat{\otimes}^d_{s,\gamma}  E$,  the function $\alpha_{w_0}(w):=\alpha(w+w_0)-\alpha(w_0)$  is  $weak^*$-derivable  at $0$ and $i_{\infty,q}\circ \alpha_{w_0}$ is derivable at $0$. It holds that $\alpha_{w_0}(0)=0$ and $\|D_0^{w^*}(\alpha_{w_0})\|\leq Lip (\alpha). $

The  justification of the claim can  be derived using propositions 4.3 and 6.41 in  \cite{BenyLind}, as    done  in \cite[Theorem 2]{FJ09} and \cite[Theorem 2.1]{LipLip23}.

Let $w_0\in \widehat{\otimes}^d_{s,\gamma}  E$ be chosen as in the claim.  Since $f_{P_{|\mathbb{V}_E}}$ satisfies (\ref{eq: FarJ}), it satisfies a  factorization as in  Lemma  \ref{lem: Lip-factrz} with such $w_0$. Using the notation in the lemma,
let ${\beta}_n(w):=n\widetilde{\beta}(\frac{w}{n})$.  Then   $Lip({\beta}_n)=  Lip(\widetilde{\beta})$.

Giving  that $Jf_P$ is homogeneous, for   each $u:=x\otimes \cdots \otimes x  \in \mathbb{V}^d_{E,\gamma}$ it holds that
   \begin{multline*}
 \|J\circ f_P(u)-{\beta}_ni_{\infty,q}D_0^{w^*}\alpha_{{w_0}}(u) \|=\|n\widetilde{\beta}i_{\infty,q}\alpha_{{w_0}}(\frac{u}{n})-{\beta}_ni_{\infty,q}D_0^{w^*}\alpha_{{w_0}}(u)  \|   \\
= \|{\beta}_nni_{\infty,q}\alpha_{{w_0}}(\frac{u}{n})-{\beta}_ni_{\infty,q}D_0^{w^*}\alpha_{{w_0}}(u)  \| \leq
    Lip(\beta)\|ni_{\infty,q}\alpha_{w_0}(\frac{u}{n})-D_0(i_{\infty,q}\alpha_{w_0}({u}))\|\xrightarrow[n \to \infty]{} 0.
 \end{multline*}

 Since   $\{{\beta}_n\}$ is a bounded set in   $Lip_0(L_q(\mu_{w_0}), \ell_{\infty}^m)$, it  has   a cluster point $\beta_0$ in it.    Let $f_D:=D_0^{w^*}({\alpha_{{w_0}}})_{|_{\mathbb{V}\gamma}}$ and $P_D$  be  the $\mathbb{V}$-operator and the polynomial  mapping
 associated to $D_0^{w^*}({\alpha_{w_0}})$,   respectively.

  Then we have   the factorization  of the polynomial $J P=\beta_0 i_{\infty,q} P_{D}$ with $\|P_{D}\|=\|{D_0^{w^*}({\alpha_{{w_0}}})}\|_{{\widehat{\otimes}^d_{s,\pi} E\rightarrow L_{\infty}}} \leq \|{D_0^{w^*}({\alpha_{{w_0}}})}\|_{{{\otimes}^d_{s,\gamma} E\rightarrow L_{\infty}}}\leq Lip(\alpha_{w_0})$.   To finish  let $k\in\mathbb{N}$, $i=1,\ldots,k$ and  $u_i:=x_i\otimes\cdots\otimes x_i, v_i:=y_i\otimes\cdots\otimes y_i\in\mathbb{V}_{\gamma,E}$. Using that  the
$q$-summing norm  $\Pi_q(i_{\infty,q})$  of the linear inclusion is one,  we have that for
 $u_i, v_i \in \mathbb{V}_{\gamma,E}$
 \begin{multline*}
\sum_{i=1}^k \left\|Jf_P(u_i)-Jf_P(v_i)\right\|^q =
  \sum_{i=1}^k \left\|\beta_0i_{\infty,q}f_{D}(u_i)-\beta_0i_{\infty,q}f_{D}(v_i)\right\|^q
  \\\leq {Lip(\beta_0)}^q
  {\Pi_q(i_{\infty,q})}^q \sup_{ \varphi\in B_{L_{\infty}^*}} \sum_{i=1}^k \left|\varphi {D_0^{w^*}(\alpha_{w_0})}(u_i)-\varphi D_0^{w^*}(\alpha_{w_0})(v_i)\right|^q
\\ \leq    Lip(\beta_0)^q \|D_0^{w^*}(\alpha_{w_0})\|^q \sup_{ \zeta \in  B_{({\otimes}^d_{s,\gamma}E)^*}} \sum_{i=1}^k \left|\zeta(u_i)-\zeta(v_i)\right|^q \\
 \leq Lip(\beta_0)^q Lip(\alpha_{w_0})^q \sup_{ p \in B_{\mathcal{P}_{\gamma}(^dE)}} \sum_{i=1}^k \left|{{p}}\left(u_i\right)-{p}\left(v_i\right)\right|^q.
\end{multline*}
 Then $\pi_q^{Lip,\gamma}({{P}_{|_{{E}}}})\leq  Lip(\alpha_{w_0}) Lip({\beta_0}) $. Since this is true for an arbitrary  finite dimensional space $E\subset X$,  by Lemma \ref{lem: pol Lip es local},  we have that   $\pi_q^{Lip}({f_P})\leq  Lip(\alpha_{w_0}) Lip( {{\beta}_0}) $ and, consequently,  $\pi_q^{Lip}(P)\leq \pi_q^L(f_P)$.

\end{proof}

\end{document}